\def\sp #1{{\mathcal {#1}}}
\def\goth #1{{\mathfrak{ #1}}}
\def\lie #1{{\sp L_{\!#1}}}
\def\pd#1#2{\frac{\partial#1}{\partial#2}}
\def\>#1{{\bf #1}}                %  notation para vectores
\def\dd.#1{\frac{\partial}{\partial#1}}
\def\Eq#1{{\begin{equation} #1 \end{equation}}}
\def\tr.{{\box{\rm.\small{Tr}}}}
\def\rp{{\mathrm{p}}}
\newtheorem{theorem}{Theorem}
\newtheorem{proposition}{Proposition}
\newtheorem{definition}{Definition}
\def\dem{\noindent\textbf{\emph{Proof. }}}\def\findem{$~\hfill\Box$}
\begin{document}
\title{Dirac Structures and Reduction of Optimal Control Problems with symmetries}

\author{A. Ibort, T. R. de la Pe\~na, R. Salmoni}
\address{Depto. de Matem\'aticas, Univ. Carlos III de
Madrid, Avda. de la Universidad 30, 28911 Legan\'es, Madrid, Spain.}
\email{albertoi@math.uc3m.es, trodrigu@math.uc3m.es, rsalmoni@math.uc3m.es}

\date{\today}
\thanks{This work was partially supported by SIMUMAT project and MTM2007-62478 Research project, Ministry of Science, Spain.}

\begin{abstract}
We discuss the use of Dirac structures to obtain a better understanding of the geometry of a class of optimal control problems and their reduction by symmetries. 
In particular we will show how to extend the reduction of  Dirac structures recently proposed by Yoshimura and Marsden \cite{Yo09} to describe the reduction of a class of optimal control problems with a Lie group of symmetry.   We will prove that, as in the case of reduction of implicit Hamiltonian or Lagrangian systems, the reduction of the variational principle and the reduction of the Dirac structure describing the Pontryagin Maximum Principle first order differential conditions coincide.  Moreover they will also reproduce E. Mart\'{\i}nez Lie algebroids reduction approach \cite{Mr04} to optimal control systems with symmetry.  The geodesic subriemannian  problem considered as an optimal control problem on the Heisenberg group, will be discussed as a simple example illustrating these results.
\end{abstract}

\maketitle

\tableofcontents

\newpage
%%%%%%%%%%%%%%%%%%%%%%%%%%%%%%%%%%%%%%%%%%%%%%%%

{\markboth{Dirac structures}{Reduction of optimal control problems}

%%%%%%%%%%%%%%%%%%%%%%%%%%%%%%%%%%%%%%%%%%%%%%%%%%%%%%%%%%%%%%%%%%%%%%%%%%%%%%%%%
\section{Introduction}

We will discuss in what follows the reduction of a class of optimal control problems with symmetry by  using the geometry of Dirac structures.  The class of optimal control problems that are suitable to be discussed from this perspective are those that have a nice geometrical description, i.e., such that both the state and control spaces are smooth manifolds (that will be assumed throughout all this paper without boundary).   Moreover the controlled dynamics is given by a vector field depending on the control parameters and the objective functional is defined by means of a local density.   There is a nice geometrical setting to describe such systems that allows for the introduction of auxiliary geometrical structures that clarify and help with the search and analysis of their solutions.   This is the case for instance of the presence of symmetries.  

Consider an optimal control problem such that there is a Lie group acting on the system and such that it preserves both the dynamics and the objective functional.   We will say then that the Lie group is a group of symmetries of the optimal control problem and that the optimal control problem has the given Lie group as a group of symmetries.    The presence of a symmetry group is always a important tool to solve the problem.  The standard way to proceed in the search for solutions of Hamiltonian or Lagrangian systems is to use the constants of the motion to simplify the problem and eventually to solve it.   More generally, various reduction techniques permit to convert the problem at hand into a reduced one on a (family of) smaller spaces that sometimes are much simpler to deal with.   From the solutions of the reduced problems a reconstruction procedure allows to obtain the solutions to the original problem.    

The history of reduction techniques for dynamical systems with symmetry is involved enough to be accurately described here.   Let us just mention the recent contributions by Yoshimura and Marsden to the problem of reduction of variational principles of (possibly implicit) Lagrangian and Hamiltonian systems on tangent or cotangent bundles by using Dirac structures and their reductions with respect to the action of a Lie group.   We can follow the development of these ideas starting with the use of Dirac structures on Lagrangian mechanics \cite{Yo06a}, \cite{Yo06b}, the reduction of the canonical Dirac structure on the cotangent bundle of a Lie group \cite{Yo07a}, the first analysis of Dirac structures for implicit Lagrangian and Hamiltonian systems \cite{Yo07b} and finally the Dirac reduction of cotangent bundles \cite{Yo09} that has been the main inspiration of this paper.     

Pontryaguine Maximum Principle (PMP) for regular geometrical optimal control problems (to be precise the differential first order conditions of PMP) admits a presymplectic formulation that has been systematically developed in a number of papers (see for instance the recent contribution by Barbero et al \cite{Ba07}-\cite{Ba08} and references therein), hence PMP can be discussed from the point of view of Dirac structures by using the canonical Dirac structure defined by the presymplectic form of the system.   As it was stated before, the problem we are facing consists in exploiting the symmetries of the problem to simplify it. One way to do that is to ``reduce'' the system getting rid of the ``redundant'' degrees of freedom of the problem. This could be done in various ways. The first and most obvious one will be to reduce the presymplectic system $(M,\Omega,H)$ by (pre)symplectic reduction. However we need a deeper analysis of the problem of reduction. (Pre)symplectic reduction is obtained by selecting sub manifolds on $M$ and quotienting them out with respect to the residual symmetry on them.  In this way we obtain a reduced system with fewer degrees of freedom whose solutions allow us to reconstruct the actual solutions of the original system. However when we pass to the reduced system the variational structure of the problem, i.e. the objective functional in our case, is lost, and we cannot be sure that the solutions we obtain in this way are really the minimum of the original objective functional. For this reason it would be better to have a reduction procedure, that will allow us to recover both a nice geometrical structure on the reduced system, as well as a reduced variational structure that would allow us to test the properties of the extremals we are computing.

The notion of Dirac structures and their reductions is particularly useful in this sense because as it has been mentioned before, it has been shown by Marsden and Yoshimura that they provide the right geometrical structure that allows to recover the reduced variational principle.
Hence describing the geometry of optimal control problems by using the notion of Dirac structure is useful in this sense because this framework allows us to reduce optimal control problems with symmetry preserving the variational structure.
Consequently in this paper we will take the road started in \cite{Yo09} and we will apply the same principles to deal with geometrical optimal control problems with symmetry.    We will show that the geometrical picture of the PMP implies to consider a bundle over the state space $P$ of the system that will be called the Pontryaguine bundle and that plays the same role that the space $TQ\oplus T^*Q$ for an implicit Lagrangian system over the configuration space $Q$.    We will describe the PMP first order differential conditions as the dynamics defined on the Pontryagin bundle by a Hamiltonian and a canonical Dirac structure and we will consider the problem of their reduction with respect to the action of a symmetry group $G$.   We will obtain the equations describing the reduced PMP in two different ways.  On one side we will derive them from the direct reduction of the variational principle determined by the use of a Lagrange multipliers theorem and, on the other side, we will proceed directly by considering the reduction of the Dirac structure determining the PMP on the Pontryagin's bundle.  The two reductions, which are computed independently, will lead to the same equations, that can be properly called the reduced PMP.    The diagramme \ref{red_dirac} below reflects the various ideas involved in this paper. 

\begin{figure}
\begin{center}
\includegraphics[width=15cm, height=23cm]{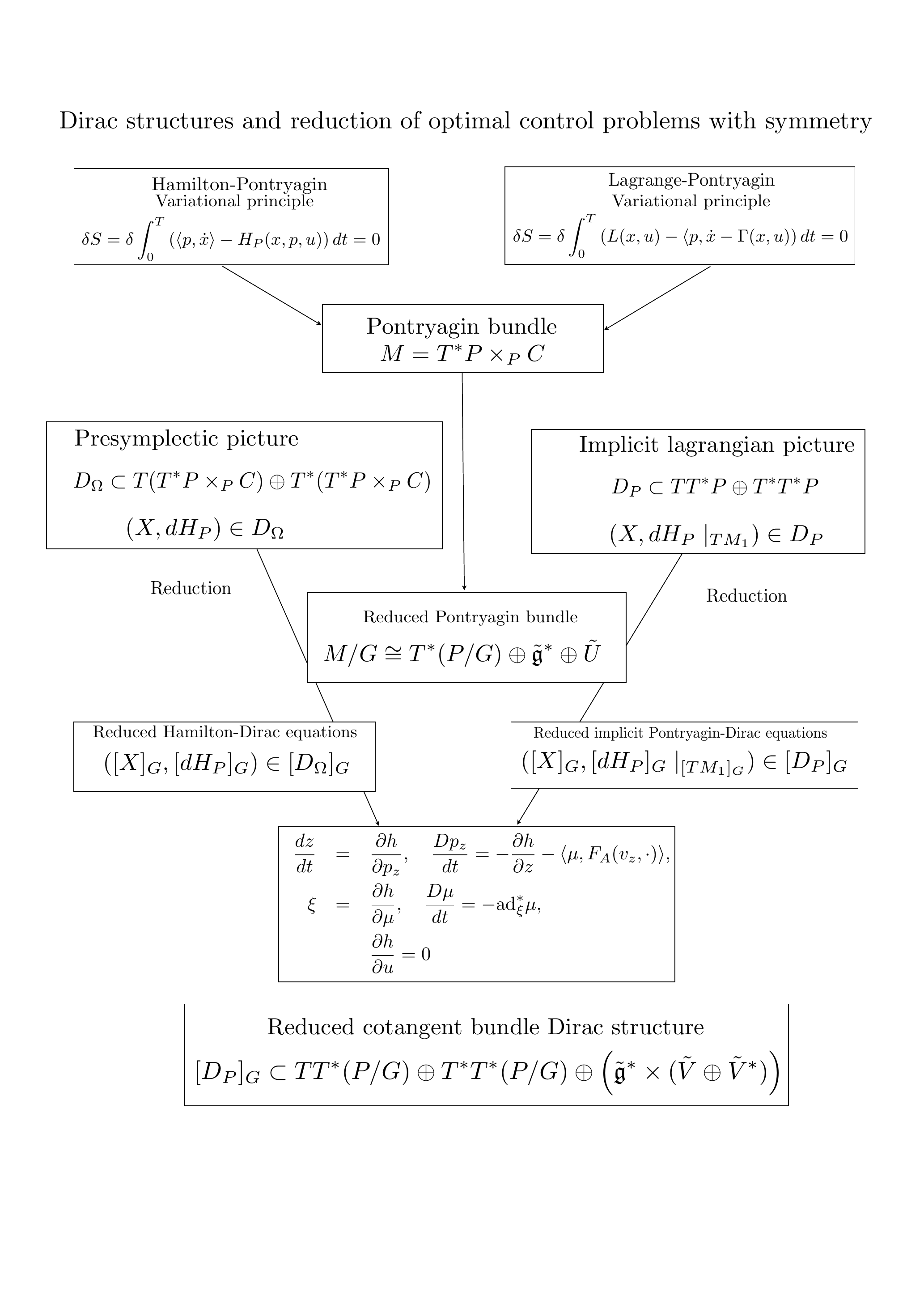}
\caption{Reduction of Dirac structures for optimal control problems} \label{red_dirac}
\end{center}
\end{figure}

Section 2 will be devoted to establish basic notations and terminology as well as the discussion of the geometrical structures involved in the description of the mentioned class of optimal control problems.   The notion of a symmetry group for such systems will be also made precise here and the first discussion on the role of Dirac structures in relation with such systems will be also introduced here.   Section 3 will deal with the computation of the reduced Pontryagin bundle, the reduced variational principle and the corresponding critical points.  The equations describing such critical points will be called the reduced PMP.   Section 4 will address the reduction problem from the Dirac structure perspective, thus the definition of the reduced Dirac structure on the Pontryagin bundle will be discussed and its relation with the cotangent bundle reduction of the canonical Dirac structure on $T^*P$, the space of states and costates of the control problem.   Taking advantage of such relation we will be able to show quite straightforwardly the corresponding reduced equations that will reproduce the reduced PMP above.  Finally, section 4 will illustrate the previous ideas by the explicit computation of the solutions of the geodesic subriemannian problem determined by the Heisenberg group $\Bbb{H}^1$ considered as an optimal control problem on the Heisenberg group with symmetry.

\newpage

\markboth{Dirac structures}{Reduction of optimal control problems}
%%%%%%%%%%%%%%%%%%%%%%%%%%%%%%%%%%%%%%%%%%%%%%%%%%%%%%%%%%%%%%%%%%%%%%%%%%%%%%%%%%%%%%%%%%%%%%%%
\section{Geometrical and Variational aspects of a class of optimal control problems with symmetry}

As it was indicated in the introduction we will consider in what follows geometrical optimal control problems.  This implies that we are going to make some assumptions about the structure of the systems in order to make transparent the geometrical ideas involved. 
As it was already indicated all spaces will be assumed to be smooth manifolds without boundary as well as all functions, curves, vector fields and forms will be assumed to be smooth.   In addition we will not address the discussion of abnormal extremals and singular optimal control systems, i.e., we will only consider regular optimal control problems (see below).  Moreover we consider that the class of admissible controls consists of smooth functions.   In doing so we will exhibit clearly the geometrical ideas without paying attention to the analytical difficulties that arise when non-smooth functions are shown.  It must be pointed out that the various constructions can be extended in a more or less natural way to more involved situations where other analytical or geometrical considerations will be needed, for instance in dealing with bounded domains or singular problems.  We will provide now some background to fix the notation.
%%%%%%%%%%%%%%%%%%%%%%%%%%%%%%%%%%%%%%%%%%%%%%%%

\subsection{A geometrical picture of optimal control problems}

\begin{definition}[Control System]
A \textsf{control system} is a triple $(P,C,\Gamma)$ where:
\begin{enumerate}
\item[i.] $P$ is a smooth manifold which is the state space of the control system.
Local coordinates in $P$ will be denoted by $x^i$, $i=1....n$ and generic points by $x$.

\item[ii.] $\pi \colon C \to P$ is a locally trivial bundle over $P$ which models the controls, i.e. the fiber $C_x = \pi^{-1}(x) \subset C$ over the state $x\in P$, is the space of controls acting on the system at the state $x$, which is assumed again to be a smooth manifold.  Adapted local coordinates on the control bundle will be denoted by $(x^i,u^a)$, $i=1,\ldots,n$, $a=1,\ldots,r$, and a generic point by $(x,u)$.

\item[iii.] $\Gamma$ is a vector field on $C$ along the projection map $\pi$ which describes the dynamics of the control system,  i.e., $\Gamma \colon C\rightarrow TP$ such that $\tau_P \circ \Gamma = \pi$, where $\tau_P \colon TP \rightarrow P$ denotes the canonical projection map.
$$\begin{array}{cc} { \bfig\xymatrix{&C\ar[dr]^{\pi}\ar[r]^{\Gamma} & TP\ar[d]^{\tau_P}\\   && P&}\efig} \end{array} $$
Natural local coordinates on the tangent bundle $TP$ will be denoted by $(x^i,v^i)$ or $(x^i,\dot{x}^i)$ if there is not risk of confusion.
The local expression of $\Gamma$ will be thus $\Gamma= f^i(x,u) \partial/\partial x^i$ and its integral curves $x^i(t)$ will satisfy the system of differential equations:
\begin{equation} \label{cs}
\dot{x}^i(t)=f^i(x(t),u(t)) , \quad i = 1, \ldots, n ,
\end{equation}
for $u(t)$ belonging to some class of smooth functions  $u: I \subset \mathbb{R}\rightarrow C$.
\end{enumerate}
\end{definition}

\begin{definition}[Optimal Control Problem]   An \textsf{optimal control problem} (OCP for short) is a quadruple $(P,C,\Gamma,L)$ where:
\begin{itemize}
\item[i.]  $(P,C,\Gamma)$ is a control system.
\item[ii.] $L: C \rightarrow \mathbb{R}$ is a smooth function defined on the control bundle, which is the lagrangian density of an objective functional to be minimized along the integral curves of the control system with fixed endpoints $x_0=x(0)$, $x_T=x(T)$.
The objective functional of the problem will have the form:
\begin{equation}
\label{of}
S(\sigma)= \int_0^T L(x(t),u(t))dt ,
\end{equation}
where $\sigma: [0,T] \rightarrow C$ is a smooth curve on $C$ such that $\sigma(t)=(x(t),u(t))$, $x(t)=\pi \circ \sigma (t)$ and $u(t) \in \pi^{-1} (x(t))$, $ \forall t \in [0,T]$.
\end{itemize}
\end{definition}

%%%%%%%%%%%%%%%%%%%%%%%%%%%%%%%%%%%%%%%%%%%%%%%%

\subsection{The presymplectic picture of geometric optimal control problems}
\label{PMP}
It is well known that the basic tool for Optimal Control Theory is the Pontryagin Maximum Principle (PMP for short). Here we consider the first order conditions in the PMP and we assume that the minimum of (\ref{of}) exists.
It is natural to give a presymplectic geometrical description of the first order conditions in the PMP.

\begin{definition}[Pontryagin bundle]
We call the \textsf{Pontryagin bundle} over $P$ the manifold $M=T^*P\times_P C$ defined as the bundle over the base space $P$ such that the fiber over $x \in P$ is the cartesian product of the corresponding fibers $T_x^*P$ and $C_x$ over $x \in P$.
\end{definition}

The manifold $M=T^*P \times_P C$  is the fiber cartesian product of the two bundles $T^*P$ and $C$ over $P$ and it has natural local coordinates $(x^i,p_i,u^a)$ induced from local coordinates $(x^i, p_i)$ of the cotangent bundle $T^*P$ and $(x^i, u^a)$ of the control bundle $C$.   We will denote the canonical projection maps from $M$ into the factor $T^*P$ and $C$ by $\rp_P$  and $\rp_C$ respectively, thus:

 $$\rp_P  \colon M \to T^*P, \quad \rp_P (x,p,u)=(x,p)$$ and
 $$\rp_C  \colon M \to C, \quad \rp_C (x,p,u)=(x,u) .$$

$$\bfig\xymatrix{&M= T^*P \times_P C \ar[dl]_{\rp_P}\ar[dr]^{\rp_C}&\\
T^*P\ar[dr]_{\pi_P}&&C\ar[dl]^{\pi}\\ &P&} \efig$$

The total space manifold $M$ carries a natural presymplectic structure $\Omega$ obtained by pulling-back to $M$ the canonical symplectic structure $\omega_P$ in $T^*P$, i.e.,
\begin{equation}\label{presympl}
\Omega=\rp_P ^* \omega_P=dx^i \wedge dp^i .
\end{equation}

Moreover the vector field $\Gamma$ and the lagrangian $L$ combine to define a natural ``energy" function on $M$, namely the Pontryagin Hamiltonian,
\begin{equation}\label{pont_ham}
H_P(x,p,u)=\langle p,\Gamma(x,u) \rangle-L(x,u) = p_if^i(x,u)-L(x,u) .
\end{equation}

The presymplectic system $(M,\Omega,H_P)$ provides a geometrical model for the PMP because the integral curves of $(M,\Omega,H_P)$ are extremals for the optimal control problem defined by $(P,C,\Gamma,L)$.   In fact we can notice that the characteristic distribution $K$ of $\Omega$, $\ker \Omega = K$, is spanned by vertical vector fields with respect to the projection map $\rp_P$, i.e.
 $$K=span\left\{ \dfrac{\partial}{\partial u^a} , a=1,\ldots,r\right\}.$$ 
Thus if we denote by $X_P$ a vector field defined by the presymplectic system above, i.e.
\begin{equation}\label{presym}
i_{X_P} \Omega = dH_P,
\end{equation}
a necessary condition \cite{Go79} for the existence of solutions for the previous implicit differential equation in $M$, i..e. integral curves of a vector field $X_P$ satisfying eq. (\ref{presym}),  is $\langle Z,dH \rangle =0$, $\forall Z \in K$, or in local coordinates:
\begin{equation}
\label{constr}
 \varphi_a = \frac{\partial H_P}{\partial u^a}=0 .
 \end{equation}

The conditions $\varphi_a = 0$, $a=1,\ldots, r$, generically define a sub manifold $M_1$ and if the presymplectic system eq. (\ref{presym}) has solutions they will satisfy the set of differential equations:

\begin{equation}\label{difeq}
\dot{x}^i=\frac{\partial H_P}{\partial p^i} (x,p,u)\qquad
\dot{p}_i= -\frac{\partial H_P}{\partial x^i} (x,p,u)
\end{equation}
on $M_1$. This is equivalent to the first order conditions in the PMP, i.e, the vector fields $X_P$ defined by (\ref{presym}) together with the consistency conditions (\ref{constr}) provide the extremals for the optimal control problem $(P,C,\Gamma,L)$.
In the case that the initial value problem posed by the differential-algebraic equations (\ref{constr})-(\ref{difeq}) at any point $x \in M_1$, has a unique solution we will say that the OCP is regular.   A necessary and sufficient condition for the system $(P,C,\Gamma,L)$ to be regular is that $W_{ab}=\partial^2H_P/\partial u^a\partial u^b$ has maximal rank everywhere on $M_1$.   We just consider regular OCP's in this paper.

On the other hand, an alternative way to think about the dynamics provided by the presymplectic structure $\Omega$ is to consider the complete lifting of the control vector field $\Gamma$ to $M$ in analogy with what happens for a vector field on a manifold.    If $Y$ is a vector field on a manifold $N$, we can consider the complete lifting $Y^c$ of $Y$ to $T^*N$ as the hamiltonian vector field defined by the hamiltonian function $P_Y (x,p) = \langle p, Y(x) \rangle$.   Such vector field $Y^c$ satisfies $i_{Y^c} \omega_Y = dP_Y$ and the flow of $Y^c$ is just the cotangent lifting of the flow of $Y$.   We can add a potential term $L(x)$ to the previous hamiltonian $P_Y$ to obtain a generalized complete lifting $\tilde{Y}$ defined  by $i_{\tilde{Y}}\omega_N = dH$, $H (x,p ) = \langle p, Y(x) \rangle - L(x)$.  Notice that the vector field $\tilde{Y}$ is projectable with respect to the canonical projection map $\pi_N\colon T^*N \to N$ and projects onto $Y$.   This construction can be extended to the case of vector fields along maps.   Thus, if $Y\colon W  \to TN$ is a vector field along the map $\pi \colon W \to N$, we can define its complete lifitng to $T^*N \times_N W = \{ (x,p,w) \in T^*N \times W \mid p \in T_x^*N, \, \pi(w) = x  \}$, as the vector field $Y^c$ on $T^*N\times_N W$ along the map $\rp_N \colon T^*N\times_n W \to T^*N$ given by $\rp_N (x,p,w) = (w,p)$ such that $i_{Y^c} \omega_N = dP_Y$.

 we can consider the vector fields $X$ determined by eq. (\ref{presym}) as a vector field along the projection map $\rp_P\colon M \to T^*P$, i.e., $X \colon M \to T(T^*P)$, such that $\tau_{T^*P}\circ X = \rp_P$.    

%%%%%%%%%%%%%%%%%%%%%%%%%%%%%%%%%%%%%%%%%%%%%%%%
\subsection{The Lagrange-Pontryagin principle for optimal control problems}
We provide here a variational picture of an optimal control problem which leads directly to the presymplectic picture above.
Consider the OCP $(P,C,\Gamma,L)$ and denote as before by $M=T^*P \times_P C$ its Pontryagin bundle. We can consider this optimal control problem as a constrained minimization problem for the functional (\ref{of}) with constraints given by the control equations (\ref{cs}), then under appropriate analytical conditions the Lagrange multipliers theorem allow us to characterize the extremals of such problem as the critical points of the functional  $\mathbb{S}: \mathcal{C}_{x_0,x_T}(M)\rightarrow \mathbb{R}$ defined by
\begin{equation}\label{LPvariational}
\mathbb{S}(\gamma)=\int_0 ^T\left[L(x(t),u(t)) + \langle p(t),\dot{x}(t)-\Gamma(x(t),u(t))\rangle \right] \, dt
\end{equation}
where $\mathcal{C}_{x_0,x_T} (M)$ is the space of the smooth curves $\gamma\colon [0,T]\to M$ with fixed endpoints $x(0) = x_0$, $x(T) = x_T$ and $\gamma(t) = (x(t),p(t),u(t))\in \mathcal{C}_{x_0,x_T} (M)$.

Although the solutions for an optimal control problem are the minima of the objective functional, we emphasize that in this geometrical framework we are looking for the extremals of $\mathbb{S}$, assuming that the minimum exists in the class of curves where we are considering.   Then if we denote a tangent vector to the space $\mathcal{C}_{x_0,x_T}(M)$ at the curve $\gamma(t)$ by $\xi \in T_\gamma \mathcal{C}_{x_0,x_T}(M)$, and standard calculation renders (see for instance \cite{De04}):
 $$
 d_\xi \mathbb{S}(\gamma)=\int_0 ^T \langle i_{\dot{\gamma}(t)}\Omega_{\gamma(t)} - d{H_P}(\gamma(t)),\xi(t)\rangle \, dt ,
 $$
for the differential of $\mathbb{S}$ in the direction of $\xi$ at $\gamma$, where  $\Omega$ is the presymplectic structure on $M$, eq. (\ref{presympl}), and $H(x,p,u)$ is the Pontryagine Hamiltonian, eq. (\ref{pont_ham}).

The differential of $\mathbb{S}$ is a local $1$-form on $\mathcal{C}_{x_0,x_T}(M)$ with local density the $1$-form along $\gamma(t)$ given by $i_{\dot{\gamma}}\Omega_{\gamma}-d{H_P}\circ \gamma$, which is precisely the geometrical structure present in the presymplectic picture. Moreover $\gamma$ is an extremal curve for $\mathbb{S}$ iff:

\begin{equation}
\label{excur}
 i_{\dot {\gamma}(t) } \Omega_{\gamma(t)} =dH_P(\gamma(t)),  \qquad t \in [0,T].
\end{equation}
Even if in a mechanical framework a variational principle of this form was called Hamilton-Pontryagine variational principle \cite{Yo09}, in the sequel we will call it \textsf{Lagrange-Pontryagine Principle} for the OCP $(P,C,\Gamma,L)$, because it is obtained from a Lagrangian density and is defined on the Pontryagine bundle.  We will call \textsf{Hamilton-Pontryagine Principle} the corresponding hamiltonian formulation, that is obtained from eq. (\ref{LPvariational}) rearranging the terms in under the integral:
\begin{equation}\label{HPvariational}
\mathbb{S}(\gamma)=\int_0 ^T\left[\langle p(t), \dot{x}(t) \rangle - H_P(x(t),p(t),u(t) \right] \, dt .
\end{equation}
 
 %%%%%%%%%%%%%%%%%%%%%%%%%%%%%%%%%%%%%%%%%%%%%%%%

\subsection{Dirac structures for optimal control problems}

Once the presymplectic and the variational pictures for optimal control problems have been introduced we proceed further and we will describe their geometry by using Dirac structures.
We recall here the definition of a Dirac structure \cite{CW88}, \cite{Co90}.

\begin{definition}[Dirac structure on a vector bundle $E$]
Let $E\rightarrow N$ be a vector bundle on a smooth manifold $N$, a Dirac structure $D$ on $E$ is a maximally isotropic sub bundle of the bundle $E \oplus E^*$ under the bilinear form $\langle\langle \cdot,\cdot\rangle\rangle$ given by $\langle\langle (u,\alpha),(v,\beta)\rangle\rangle =\langle \beta, u \rangle + \langle \alpha, v\rangle$,  $u,v \in E$ and $\alpha,\beta \in E^*$.
\end{definition}

\begin{definition}[Integrability of a Dirac structure]
If $E=TN\rightarrow N$ is a tangent bundle of a smooth manifold $N$, then we will say that a Dirac structure $D$ on $TN$ (i.e. a maximally isotropic sub bundle $D\subset TN\oplus T^*N$) is integrable if the condition
$$\langle\lie{X_1} \alpha_2, X_3\rangle+
\langle \lie{X_2}\alpha_3, X_1 \rangle+ \langle \lie{X_3}
\alpha_1,X_2 \rangle=0, $$
is satisfied 
for all $(X_1,\alpha_1)$, $(X_2,\alpha_2)$, $(X_3,\alpha_3) \in D$ where 
$\lie{X}$ denotes the Lie derivative along the vector field $X$ on $M$. 
This condition is equivalent to say that $D$ is involutive with respect to the Courant Bracket \cite{Co90} defined on the set of the sections of the bundle $TN \oplus T^*N$:
$$ [(X,\alpha),(Y,\beta)]=([X,Y], \lie{X} \beta - \lie{Y} \alpha + \dfrac{1}{2} d(\beta(X)-\alpha(Y)) ) ,$$
i.e., if it holds
$$
[(X,\alpha),(Y,\beta)] \in D \qquad \forall ((X,\alpha),(Y,\beta))\in D
$$
\end{definition}

\begin{definition}[Dirac structure on a manifold N]
An integrable Dirac structure on $TN$ is called a Dirac structure on the manifold $N$
\end{definition}

A distinguished class of Dirac structures are provided by graphs of both presymplectic or Poisson structures on manifolds.  Thus we define the Dirac structure $D_\Omega$ associated to a given  2-form $\Omega$ on a smooth manifold $N$ as:
$$
D_\Omega =\{ (v,\alpha) \in TN\oplus T^*N \mid \alpha(w)= \Omega(v,w) ~ \forall w\in TN\}
$$
and the closedness of $\Omega$ is equivalent to the integrability of $D_\Omega$.
 
Thus it is natural to consider the Dirac's structure associated with an optimal control problem as the Dirac structure defined by its presymplectic picture.

\begin{definition}[Dirac structure picture of an OCP]  We will call the Dirac structure $D_\Omega \subset TM\oplus T ^*M$ defined by the canonical presymplectic structure $\Omega$ on the Pontryagine bundle $M = T ^*P \times_P C$ the \textsf{Dirac structure of the optimal control problem} $(P,C,\Gamma, L)$.
\end{definition}
 
Locally $D_\Omega$ is given by $D_{\Omega}(x,p,u) = \{(v_x,v_p,v_u;p_x,p_p,p_u)\in
T_{(x,p,u)}M\oplus T_{(x,p,u)}^*M \, | \, v_x=p_p;\, v_p=-p_x;\, p_u=0\}$.

If $D$ is a Dirac structure on a manifold $N$ and $H$ is a smooth function on it, the dynamics defined by $D$ and $H$ are given by the vector fields $X$ on $N$ satisfying: $(X,dH) \in D$.  We can conclude therefore with the proposition-definition for the Dirac picture for the PMP of an optimal control problem.

\begin{proposition}
Let $(P,C,\Gamma,L)$ be an optimal control problem. Consider the canonical Dirac structure on $M=T^*P \times_P C$ given by $D_{\Omega}$ where $\Omega$ is the canonical presymplectic structure on $M$.   If $H_P(x,p,u)=\langle p,\Gamma(x,u)\rangle - L(x,u)$ defines the Pontryagin's Hamiltonian, then the dynamics defined by $D_{\Omega}$ and $dH$ describe the extremal curves for the PMP of the optimal control problem:
$$
(X,dH)\in D_{\Omega} .
$$
\end{proposition}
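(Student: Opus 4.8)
The plan is to unwind the definition of the dynamics attached to a Dirac structure and verify, in the natural coordinates $(x^i,p_i,u^a)$ of $M$, that the resulting condition coincides with the differential--algebraic system (\ref{constr})--(\ref{difeq}) already identified in \S\ref{PMP} as the first order conditions of the PMP for $(P,C,\Gamma,L)$. By definition the dynamics defined by $D_\Omega$ and the function $H_P$ are the vector fields $X$ on $M$ with $(X,dH_P)\in D_\Omega$, and since $D_\Omega=\{(v,\alpha)\in TM\oplus T^*M \mid \alpha(w)=\Omega(v,w)\ \forall w\in TM\}$, this membership is literally the presymplectic equation $i_X\Omega=dH_P$ of eq. (\ref{presym}), now read pointwise as a condition on the pair $(X,dH_P)$ rather than as an equation that presupposes its own solvability. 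This reinterpretation is the real content of the statement: the Dirac structure packages eq. (\ref{presym}) together with its consistency conditions into a single well-posed object that does not require $\Omega$ to be nondegenerate. Note also that this is exactly eq. (\ref{excur}) characterising the extremals of the Lagrange--Pontryagin functional $\mathbb{S}$, so the Dirac and the variational pictures coincide.

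Next I would run the local computation using the coordinate description of $D_\Omega$ recalled just above the statement, $D_{\Omega}(x,p,u)=\{(v_x,v_p,v_u;p_x,p_p,p_u)\mid v_x=p_p,\ v_p=-p_x,\ p_u=0\}$. Writing $X=\dot x^i\,\partial/\partial x^i+\dot p_i\,\partial/\partial p_i+\dot u^a\,\partial/\partial u^a$ and $dH_P=(\partial H_P/\partial x^i)\,dx^i+(\partial H_P/\partial p_i)\,dp_i+(\partial H_P/\partial u^a)\,du^a$, so that $(X,dH_P)$ has $TM$-component $(v_x,v_p,v_u)=(\dot x^i,\dot p_i,\dot u^a)$ and $T^*M$-component $(p_x,p_p,p_u)=(\partial H_P/\partial x^i,\partial H_P/\partial p_i,\partial H_P/\partial u^a)$, the three defining relations of $D_\Omega$ translate term by term into
\[
\dot x^i=\frac{\partial H_P}{\partial p_i},\qquad \dot p_i=-\frac{\partial H_P}{\partial x^i},\qquad \frac{\partial H_P}{\partial u^a}=0 ,
\]
that is, Hamilton's equations (\ref{difeq}) together with the consistency conditions (\ref{constr}) defining $M_1$. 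Substituting $H_P=p_if^i-L$ one recovers the familiar PMP system $\dot x^i=f^i(x,u)$, $\dot p_i=-p_j\,\partial f^j/\partial x^i+\partial L/\partial x^i$, $p_j\,\partial f^j/\partial u^a=\partial L/\partial u^a$ on $M_1$. Hence a curve $\gamma(t)=(x(t),p(t),u(t))$ in $M$ whose velocity satisfies $(\dot\gamma(t),dH_P(\gamma(t)))\in D_\Omega$ is precisely an extremal of the OCP, and conversely every such extremal furnishes a vector field along $\rp_P$ obeying the Dirac condition; this is the assertion of the proposition.

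The only subtle point --- and the reason the Dirac language is the appropriate one here --- is the degeneracy of $\Omega$: since $\ker\Omega=K=\mathrm{span}\{\partial/\partial u^a\}$, in $(X,dH_P)\in D_\Omega$ the vector $X$ is determined only modulo $K$, and the mere existence of such an $X$ at a point forces $dH_P$ to annihilate $K$, which is exactly why the algebraic equations (\ref{constr}) emerge automatically from the single membership condition instead of having to be imposed separately. On $M_1$ the remaining, tangential part of the relation then reproduces (\ref{difeq}), and invoking the Gotay--Nester analysis \cite{Go79} already quoted ensures that for a regular OCP these possess genuine integral curves, completing the identification of the Dirac dynamics with the PMP extremals. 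Beyond this bookkeeping for $\ker\Omega$ I do not anticipate any obstacle: the argument is an unwinding of the definition of $D_\Omega$ followed by the coordinate translation above.
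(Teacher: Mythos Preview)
Your proof is correct and follows the same line as the paper: the proposition is presented there as a ``proposition--definition'' whose content is meant to follow immediately from the presymplectic picture of \S\ref{PMP} and the local description of $D_\Omega$ given just before the statement, so no separate proof is written out. Your explicit unwinding of $(X,dH_P)\in D_\Omega$ into the equations (\ref{constr})--(\ref{difeq}) via the coordinate form of $D_\Omega$ is exactly the computation the paper is tacitly invoking.
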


Because we are interested in regular optimal control problems there is a further refinement of the description of the OCP in terms of Dirac structures that will be very helpful later on.  

$$\begin{array}{cc}
{ \bfig\xymatrix{&M\ar[dr]^{\emph{p}_P}\ar[r]^{X} & T(T^*P)\ar[d]^{\tau_{T^*P}}\\
&&T^*P&}\efig} & {
\bfig\xymatrix{&M\ar[dr]^{\Phi}\ar[r]^{\emph{p}_P} &
T^*P\ar[d]^{\pi_P}\\   && P&}\efig}
\end{array}
$$

\bigskip

Then we can consider the canonical Dirac structure $D_P \subset
T(T^\ast P)\oplus T^\ast (T^\ast P)$ defined by the canonical symplectic structure $\omega_P$ in $T^\ast P$.
We have seen in section (\ref{PMP}) that the equations of the system related to the PMP are described by a vector field $X$ along the canonical map  $\mathsf{p}_P \colon M=T^*P \times_P C \to T^*P$, i.e. $X \colon M \to T(T^\ast P)$ such that $\tau_{T^\ast
P} \circ X= \emph{p}_P$ or locally, if $(x,p,u)\in M$, then
$X(x,p,u) \in T_{x,p}(T^*P)$. The Pontryagine Hamiltonian  $H
\colon M \to \mathbb{R}$ allow us to define $dH \colon M \to
T^*M$ and we can take the restriction of $dH$ to the reduced subspace
$TM_1$, where $i_1 \colon M_1 \to M$ is the sub manifold defined by the consistency conditions of the Gotay-Nester algorithm (\ref{constr}) : $\varphi_a=\pd{H}{u^a}=0$. We stress that $i_{1 \ast}
\colon TM_1 \hookrightarrow TM$ allow us to consider in a natural way the restriction of $dH$ to $TM_1$.
 In this way the implicit Dirac formulation of the Optimal Control Problem $(P,C,\Gamma,L)$ is given by the equations:

$$(X, dH \Big |_{TM_1}) \in D_P ~ ~in ~~ M_1$$

or:
$(x,p,u)\in M$, then $$\left(X(x,p,u), dH(x,p,u) \Big |_{TM_1}
\right)\in D_P(x,p), ~\forall (x,p,u)\in M_1.$$

Notice that this equations restricted to $M_1$ have the same meaning of the hamiltonian equations in the presymplectic picture when we solve the consistency conditions (\ref{constr}).
%%%%%%%%%%%%%%%%%%%%%%%%%%%%%%%%%%%%%%%%%%%%%%%%
\subsection{Symmetries and Optimal Control Systems}\label{symmetric_ocp}

The main purpose of this paper is the study of geometrical optimal control systems with symmetries, and in particular, their reduction both from the point of view of the reduction of the variational problem that define them and the reduction of the corresponding PMP.   
For it we will start by specifying the notion of symmetry of an optimal control problem. 

Let $(P,C,\Gamma,L)$ be an optimal control problem. Let $G$ be a Lie group acting on the bundle $C$. We will assume that the action of $G$ on $C$ is fibered with respect to the fiber bundle structure of $C$ over $P$.  We denote the action of $G$ on $C$ by: $$\Psi: G\times C \rightarrow C$$ and the action of $G$ on $P$ by: $$\Phi: G \times P \rightarrow P$$
 Thus we have that $\pi\circ \Psi_g = \Phi_g \circ \pi, ~ \forall g \in
 G$ where $\Phi_g (x)= \Phi(g,x)$
 and $\Psi_g (x,u)=\Psi (g, (x,u)), ~\forall g\in G ,~ (x,u)\in C $
 or, in other words, the following diagram commute:
$$\begin{array}{cc}
{ \bfig\xymatrix{C\ar[r]^{\Psi_g}\ar[d]_{\pi}  & C\ar[d]^{\pi} \\
P\ar[r]_{\Phi_g}&P&&}\efig }
\end{array}
$$
\begin{definition}\label{ocp_symmetry}
The optimal control problem $(P, C, \Gamma, L)$ will be said to be $G$-invariant, or to have $G$ as a group of symmetries, if the Lie group $G$ has a fibered action $(\Phi, \Psi)$ on the bundle $C$ and moreover it satisfies:
\begin{itemize}

\item[i.] $L \circ  \Psi_g=L$, 
\item[ii.] $T\Phi_g \circ \Gamma = \Gamma \circ \Psi_g$,
\end{itemize}
for all $g\in G$.
\end{definition}

First, we must notice that the action $\Phi$ of the group $G$ on $P$ lifts both to $TP$ and $T^*P$ by tangent and cotangent lifting respectively. Thus we shall denote by $g\cdot v =T\Phi_g(v)$ and $g\cdot
p=T\Phi^*_{g^{-1}} (p)$, for all $v\in T_xP$, $p\in T_x^*P, \: g\in G$ the corresponding lifted actions.
Then the group $G$ will also act on the Pontryagine bundle $M= T^*P \times_P C$ as:
 $$ g\cdot (x,p,u)=(gx,gp,gu)=(\Phi_g(x),T\Phi^*_{g^{-1}}(p), \Psi_{g_{x}}(u)) .$$

 Secondly, we observe that the presymplectic picture of the control system is also $G$-invariant and the group $G$ is represented (pre)symplectically on such system.

 \begin{proposition}
 The group $G$ acts (pre)symplectically on $(M,\Omega,H)$.
 Moreover, there is an equivariant momentum map $J_M: M\rightarrow \goth g^*$ such that:
 $$ i_{\xi_M}\Omega=\langle dJ_M,\xi \rangle$$
 where $\xi \in \goth g$ and $\xi_M$ is the vector field on $M$ describing the infinitesimal action of $\xi$.
 Moreover, we have $\rp_P^*J_P = J_M$ , where $J_P\colon T^*P \to \goth g^*$ is the canonical momemtum map corresponding
 to the action of $G$ on $T^*P$.  
 \end{proposition}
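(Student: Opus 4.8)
The plan is to verify the three assertions in turn, each reducing to a known fact about the cotangent bundle together with the pullback compatibility coming from the fiber product structure of $M$. First I would establish that $G$ acts presymplectically on $(M,\Omega,H_P)$. Since the lifted action on $M$ is built factorwise as $g\cdot(x,p,u) = (\Phi_g(x), T\Phi^*_{g^{-1}}(p), \Psi_{g_x}(u))$, the projection $\rp_P\colon M\to T^*P$ is $G$-equivariant by construction, where $G$ acts on $T^*P$ by the cotangent lift of $\Phi$. Because the cotangent lift of any diffeomorphism preserves the canonical symplectic form $\omega_P$, i.e. $T\Phi^*_{g^{-1}}{}^*\omega_P = \omega_P$, and because $\Omega = \rp_P^*\omega_P$, equivariance of $\rp_P$ gives $g^*\Omega = g^*\rp_P^*\omega_P = \rp_P^*(g^*\omega_P) = \rp_P^*\omega_P = \Omega$. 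For the Hamiltonian, I would use the two invariance hypotheses of Definition \ref{ocp_symmetry}: from $T\Phi_g\circ\Gamma = \Gamma\circ\Psi_g$ one gets $\langle g\cdot p, \Gamma(g\cdot(x,u))\rangle = \langle T\Phi^*_{g^{-1}}(p), T\Phi_g(\Gamma(x,u))\rangle = \langle p, \Gamma(x,u)\rangle$ by the definition of the cotangent lift as the inverse transpose, and from $L\circ\Psi_g = L$ one gets invariance of the potential term; hence $H_P\circ g = H_P$.

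Next I would construct the momentum map. The cotangent lift of the $G$-action on $P$ carries the standard equivariant momentum map $J_P\colon T^*P\to\goth g^*$ given by $\langle J_P(x,p),\xi\rangle = \langle p,\xi_P(x)\rangle$, where $\xi_P$ is the infinitesimal generator of $\Phi$ on $P$; it satisfies $i_{\xi_{T^*P}}\omega_P = d\langle J_P,\xi\rangle$ and is $\mathrm{Ad}^*$-equivariant. I then define $J_M := \rp_P^*J_P = J_P\circ\rp_P$, which is exactly the claimed relation $\rp_P^*J_P = J_M$, and is equivariant since $\rp_P$ is. To check the momentum equation on $M$, note that the infinitesimal generator $\xi_M$ of the $G$-action on $M$ is $\rp_P$-related to $\xi_{T^*P}$ (again because $\rp_P$ is equivariant, so differentiating the intertwining relation at the identity gives $T\rp_P\circ\xi_M = \xi_{T^*P}\circ\rp_P$). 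Therefore, using that pullback commutes with interior product along related vector fields,
\[
i_{\xi_M}\Omega = i_{\xi_M}\rp_P^*\omega_P = \rp_P^*\bigl(i_{\xi_{T^*P}}\omega_P\bigr) = \rp_P^*\,d\langle J_P,\xi\rangle = d\langle \rp_P^*J_P,\xi\rangle = d\langle J_M,\xi\rangle,
\]
which is the desired identity.

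I would also remark that, strictly speaking, one should check that $\xi_M$ and $\xi_{T^*P}$ are $\rp_P$-related; this is the only point requiring a small computation, namely differentiating $\rp_P\circ g = (g\cdot)\circ\rp_P$ in $g$ at the identity in the direction $\xi$. Beyond that, everything is formal manipulation with pullbacks, so I do not anticipate a genuine obstacle. If one wanted to be careful about the word "momentum map" in the presymplectic (rather than symplectic) setting, the only subtlety is that $J_M$ need not be unique, but the formula $J_M = \rp_P^*J_P$ singles out the natural choice inherited from the cotangent bundle, and that is precisely what the statement asserts.
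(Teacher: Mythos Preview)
Your proposal is correct and follows essentially the same approach as the paper: both define $J_M$ by the formula $\langle J_M(x,p,u),\xi\rangle = \langle p,\xi_P(x)\rangle$, which is precisely $J_M = \rp_P^*J_P$, and then verify the momentum identity and equivariance. The paper's proof is extremely terse (``a simple computation shows\ldots''), whereas you supply the actual computation by exploiting systematically that $\Omega$, $J_M$, and $\xi_M$ are all obtained from their $T^*P$ counterparts via $\rp_P$, so the presymplectic momentum equation on $M$ is the $\rp_P$-pullback of the symplectic one on $T^*P$; you also explicitly verify invariance of $\Omega$ and $H_P$, which the paper leaves implicit.
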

 
 \begin{proof}  A simple computation shows that the function $J_M(x,p,u) = \langle p,\xi_M(x,p,u) \rangle$ defined on $M$ satisfies the equation above, hence it is the (presymplectic) momentum map of the action.  Again it is easy to check that $J_M$ is equivariant, i.e.,
 $J_M(gx,gp,gu) = Ad_{g-1}^* J_M(x,p,u)$.
 
 \end{proof}

 In general we will assume for simplicity in what follows that the orbit space of $G$ on $P$, i.e., $P/G$ is a smooth manifold
and that the canonical projection map $\rho \colon P \to P/G$ is a submersion.  Later on in order to work out compact expressions for the expressions describing the reduced optimal control problems, we will even assume that the action of $G$ on $P$ is free and proper, i.e., that $P$ is a principal fiber bundle over $P/G$ with structural group $G$.

 Besides it will be convenient for the stated purposes to assume that the control
 bundle $C$ has a particular form that will make easy to work out the expressions and equations on the reduced spaces.  We will assume thus that $C$ is the pull--back to $P$ of an associated space to $P$, i.e., let $U$ be a smooth manifold where the group $G$ acts on the left, i.e., there is an action $\Sigma \colon G \times U \to U$.  Then, assuming the $P/G$ is a smooth manifold, consider the bundle over $P/G$ with standard fiber $U$ given by $\tilde{U} = (P \times U)/G = P \times_G U$.   We shall denote the pull--back $\rho^*\tilde{U}$ of $\tilde{U}$ to $P$ or to $T^*P$ with the same symbol if there is no risk of confusion.   Notice that $\rho^*\tilde{U} \cong P \times U$, however we will keep the notation $\tilde{U}$ whenever it will be convenient to indicate the we are considering a bundle over $P$.

%%%%%%%%%%%%%%%%%%%%%%%%%%%%%%%%%%%%%%%%%%%%%%%%%%%%%%%%%%%%%%%%%%%%%%%%%%%%%%%%%%%%%%%%%%%%%%%%

%%%%%%%%%%%%%%%%%%%%%%%%%%%%%%%%
%%%%%%%%%%%%%%%%%%%%%%%%%%%%%%%%%%%%%%%%%%%%%%%%%%%%%%%%%%%%%%%%%%%%%%%%%%%%%%%%%%%%%%%%%%%%%%%%

\section{The reduction of  the Lagrange--Pontryagin principle for a class of optimal control problems with symmetry}

Assuming that $(P,C,\Gamma, L)$ is an OCP with symmetry $G$, then
the Lagrange--Pontryagin's functional
$$
\mathbb{S}(x,p,u)=\int_0 ^T [L(x,u) + \langle
p,\dot{x}-\Gamma(x,u)\rangle]dt
$$ 
defined on the space $\mathcal{C}(M)$ of smooth curves $\gamma(t)=(x(t),p(t),u(t))$ on the Pontryagin's bundle $M$, is invariant with respect to the induced action of the group $G$ on such space of curves, $g\cdot \gamma (t) = (g\cdot x(t), g \cdot p(t), g\cdot u(t))$:
$$ \mathbb{S} (g \cdot \gamma ) = \mathbb{S} (\gamma ), \qquad \forall \gamma \in \mathcal{C}(M) .$$
   If we were dealing with an specific optimal control problem with fixed endpoints $x_0$, $x_T$, we will assume that they are fixed points for the action of $G$, in such a way that the group $G$ acts on the space $\mathcal{C}_{x_0,x_T}(M)$.   Alternatively in the presence of symmetries, the optimal control problem with fixed endpoints $x_0$, $x_T$ can be replaced by a slightly generalized version where we consider endpoints belonging to the sub manifolds $G x_0$ and $G x_T$ respectively.  In both cases the group $G$ acts naturally in the corresponding space of curves.  
In any of the previous circumstances, we will have that the Lagrange--Pontryagin functional $\mathbb{S}$ will descend to the quotient space of equivalence classes of curves with respect to the action of the group $G$, $\mathcal{C}(M)/G$ and we will call it the reduced Lagrange--Pontryagin functional.     If we denote such functional by $[\mathbb{S}]_G$, then we will like to characterize its extremal curves because there is a natural one--to--one correspondence between extremal curves of $[\mathbb{S}]_G$ and equivalence classes of extremal curves of $\mathbb{S}$.   The problem we will study in the next paragraphs is the computation of the critical points of $[\mathbb{S}]_G$ by using the geometry of the quotient of the Pontryagin bundle $M$ by $G$.   The explicit expression of the Euler--Lagrange's equations $\delta [\mathbb{S}]_G = 0$  in terms of geometrical objects on $M/G$ will be called the reduced Lagrange--Pontryagin's equations and because they will correspond to the reduced expression of the first order differential conditions of the PMP, we will also call them the
reduced (first order conditions of the) Pontryagin Maximum Principle for the optimal control system $(P,C,\Gamma, L)$.
 In order to work them out we will need first the description of various quotient bundles related to a reduced optimal control problem.  This is the task of the next section.

%%%%%%%%%%%%%%%%%%%%%%%%%%%%%%%%%%%%%%%%%%%%%%%%

\subsection{Description of some canonical reduced bundles}\label{reduced_bundles}

We will need first to describe in a convenient form a few bundles that will show up along the description of both, the reduction of the Lagrange--Pontryagin variational principle and the reduction of the presymplectic Dirac structure $D_\Omega$.
As we said, we will assume in what follows that the action of the Lie group $G$ on $P$ is free and proper, hence the quotient space $P/G$ is a smooth manifold and the canonical projection map $\rho_P \colon P \to P/G$ is a submersion, i.e., we are assuming that $P$ is a principal fiber bundle over $P/G$ with structure group $G$.  Orbits of $G$ on $P$ will be denoted either by $G\cdot x$ or $[x] = \{ gx \mid g\in G \}$.  Thus points $z \in P/G$ will be just $z = [x]$, with $\rho_P (x) = z$.  Local coordinates on $P/G$ will be denoted by $z^\alpha$, $\alpha = 1, \ldots, s$.

\subsubsection{The adjoint and coadjoint bundles, $\tilde{\goth g}$ and $\tilde{\goth g}^*$ of a principal bundle $P(G,P/G)$}
Given a principal fiber bundle with total space $P$ and structure group the Lie group $G$, we can construct two canonical associated bundles  the adjoint and the coadjoint bundle $\mathrm{Ad} P$ and $ \mathrm{Ad}^* P$. They are the associated bundles corresponding to the adjoint action of $G$ on the Lie algebra $\goth g$ of $G$ and the coadjoint action of $G$ on the dual of the Lie algebra $\goth g^*$.  The adjoint bundle is the vector bundle over $P/G$ with fiber the linear space $\goth g$ and structure group $G$, this is $\mathrm{Ad}P = P \times_G \goth g$, where $P \times_G \goth g = (P \times \goth g)/G$ and the group $G$ acts on the left on $P \times \goth g$ as $g\cdot (x, \xi ) = (xg^{-1}, Ad_g \xi )$.  The adjoint bundle will be also denoted in what follows by $\tilde{\goth g}$ and its points will be denoted by $(z,\zeta )$.
In a similar way we will define the coadjoint bundle as the associated bundle to the principal bundle $P(G,P/G)$ with respect to the coadjoint action of $G$ on the dual of the Lie algebra $\goth g^*$, thus $\mathrm{Ad}^*P = P \times_G \goth g^*$.  Notice that because the coadjoint action is the dual action of $G$ on $\goth g$, then the bundles $\mathrm{Ad}^*P$ is the dual bundle to $\mathrm{Ad}P$, thus it makes sense to denote it by $\tilde{\goth g}^*$.

\subsubsection{The principal bundles $TP$ and $T^*P$}
We will stablish now some notations and identities for $TP$ and $T ^*P$. 
Because $G$ acts freely and properly on $P$, it also acts freely and properly over $TP$.
We will denote this action by $\Phi_{TP} \colon G \times TP \to TP$, $\Phi_{TP}(g,x,v_x) = (gx, T\Phi_g(x)v_x)$, $g\in G$, $x\in P$, $v_x\in T_xP$,  or simply by $g(x,v_x) = (gx, gv_x)$ if there is no risk of confusion.
Then $\rho_{TP} \colon TP \to TP/G$ is a principal bundle with fiber $G$.  It is worth mention that $TP$ is also a principal fiber bundle over $T(P/G)$ with fiber $TG$.
We can see it in two steps. First, we consider the tangent map to the projection $\rho_P$, $T\rho_P \colon TP \rightarrow T(P/G)$. It maps $(x,v_x)\in T_x P$ into $(z,v_z)$ where $v_z \in T_z(P/G)$ is given by $v_z = T\rho_P(x)v_x$.
$$\begin{array}{cc} {\bfig\xymatrix{TP\ar[r]^{T\rho}\ar[d]_{\tau_P}
& T(P/G)\ar[d]^{\tau_{P/G}} \\ P\ar[r]_{\rho}& P/G&&}\efig}
\end{array}$$
Second, consider the Lie group $TG$. We can use left translations to identify it with $G\times \goth g$ by means of the map $\Lambda \colon TG \to G \times \goth g$, $\Lambda (g, v_g) = (g, \xi = TL_g^{-1}v_g)$.  
The natural composition law of $TG$ is given by $(g,v_g)\cdot (h,v_h) = (gh, v_g \cdot v_h)$ where $v_g  \cdot v_h = 
d(g(t)h(t))/dt |_{t=0}$ and $v_g  = dg(t)/
dt |_{t=0}$, $v_h = d h(t)/dt |_{t=0}$ respectively and the composition law induced on $G \times \goth g$ becomes $(g,\xi).(h,\zeta)=(g.h,Ad_h \xi +\zeta)$, for all $(g,\xi), (h,\zeta)\in G\times \goth g$.
Moreover there is a natural action of $TG$ on $TP$ given as: $$(g,v_g).(x,v_x)=(g.x,\dfrac{d}{dt}
g(t)x(t)|_{t=0})\in T_{gx} P.$$ Notice that $\dfrac{d}{dt} g(t)
x(t)|_{t=0}=\dfrac{d}{dt} g(t) g^{-1} g x(t)|_{t=0}=(Ad_g \xi)_P
(gx)+g_{*}v_x$ or in other words:
$$
(g,\xi).(x,v_x)=(g.x,g_{*}v_x+(Ad_g \xi)_P (g.x))
$$
In a similar way the right action of $G$ on $P$ can be lifted to a right action of $G$ on $T^*P$ as $(x,\alpha_x)\cdot g = (xg, T\Phi_g^*(x)\alpha_x )$, $g \in G$, $x\in P$, $\alpha_x \in T_x^*P$.    The action of $G$ on $T^*P$ is free and proper hence $T^*P$ is a principal bundle over $T^*P/G$.  Moreover the cotangent bundle $T^*G$ of $G$ carries a natural Lie group structure.  This structure can be easily described by using the last characterization of the Lie group structure of $TG$ as a semi-direct product.  In fact, we can identify $T^*G$ with the cartesian product $G\times \goth g^*$ by using right translations, i.e. $\Lambda^* \colon T^*G \to G \times \goth g^*$, $\Lambda (g, \alpha_g) = (g, R^*\alpha_g)$.    Hence because $G$ acts naturally on $\goth g^*$ by the coadjoint representation, we can define the semidirect product with composition law $(g,\mu)\cdot (h, \nu ) = (gh, Ad_h^*\mu + \nu)$.   With this group structure $T^*G$ acts on $T^*P$ as $(x,\alpha_x)\cdot (g, \mu ) = (xg, )$.

\subsubsection{The associated bundle $\tilde{N}$ determined by a subgroup $K$ of the structure group $H$ of a principal bundle $X$}
Some features of the description and properties of the quotient bundle $TP/G$ are going to appear repeatedly on what follows,
thus it is worth to spend some lines commenting the general framework to such problem.  Consider $X$ to be a right principal bundle with structure group $H$, base space $X/H$ and projection map $\rho_{X,H} \colon X \to X/H$.  Let $K$ be a subgroup of $H$ and $N = H/K$ the corresponding homogeneous $H$--space.  Notice that $H$ acts naturally on the left on $N$ by $h\cdot (h'K)  = (hh')K$, for all $h'K$ a coset in $N$.  Then $X$ is also a principal bundle with structure group $K$, base space $X/K$ and projection map $\rho_{X,K}\colon X \to X/K$.  There is also a natural projection map $\eta \colon X/K \to X/H$ given by $xK \mapsto \eta(xK) = xH$.   We observe that the previous map $\eta$ defines a bundle over $X/H$ with standard fiber $N$.  Moreover this projection map is nothing else but the associated bundle to the $H$ principal bundle $X$ by the action of $H$ on $N$ above, this is $X \times_H N \cong X/K$, where $X \times_H N$ denotes the quotient space of $X \times N$ with respect to the left action of $H$ on it defined by $h\cdot (x,h'K) = (xh^{-1}, (hh')K)$.   We can denote by $\Pi \colon X \times N \to X \times_H N$ the map sending each point $(x, h'K)$ into the corresponding equivalence class $[x,h'K]$ (the diagrams below illustrate some of these maps).  

$$\begin{array}{cc} {\bfig\xymatrix{X\times N\ar[r]^{p_X}\ar[d]_{\Pi}
& X \ar[d]^{\rho_{P,H}} \\ X\times_H N\ar[r]_{\eta}& X/H&&}\efig}
\end{array}  \quad \begin{array}{cc} {\bfig\xymatrix{X\ar[r]^{\rho_{X,H}}\ar[d]_{\rho_{X,K}}
& X/H \ar[d]^{\cong} \\ X/K\ar[r]_{\eta}& X/H&&}\efig}
\end{array}$$
In fact, it is clear that the canonical map $X \times N \to X/K$ defined by $(x,h'K) \mapsto (xh')K$ descends to a map $X \times_H N \to X/K$, that provides the identification between both spaces.   In the particular instance that $H$ is a semidirect product of two groups $K$ and $N$ 
where $N$ is a normal subgroup of $H$, then we conclude that the quotient space $X/K$ is a bundle with standard fiber the group $N$ and structure group $K$ with respect to the action of $K$ on $N$ provided by the semidirect structure, i.e., $K$ acts on $N$ by the restriction to $N$ of the inner automorphism $h \mapsto h k h^{-1}$.   Finally if, $N$ is a vector space with the natural abelian group structure, then the quotient space $X/K$ is the associated vector bundle to $X \to X/H$ with structure group $K$ and fiber $N$.  

In the situation we were considering above the principal bundle $X$ was the space $TP$ and the structural group $H$ of the principal bundle was $TG$.  The structure group $TG$ is a semidirect product of the subgroups $G$ and the abelian group $\goth g$ where the action of $G$ on $\goth g$ is given by the adjoint action.   Then, using the remarks above we conclude that $TP/G$ is the associated bundle over $TP/TG = T(P/G)$ with fiber $\goth g$ with respect to the adjoint action of $G$ on $\goth g$, $TP  \times_{TG} \goth g$.  To complete the description of this bundle we shall only notice that the associated bundle $TP \times_{TG} \goth g$ is just the pullback of the adjoint bundle $\tau_{P/G}(\tilde{\goth g})$.

 \subsubsection{The reduced tangent bundle $TP/G$ or Atiyah bundle}

The bundle $TP/G$ can be identified with the pull-back to $T(P/G)$ of the adjoint bundle $\tilde{\goth g}\rightarrow P/G$, and with some slight abuse of notation will be denoted as $T(P/G)\oplus \tilde{\goth g}$.
This elements will be of the form $[x,v_x]=(z,v_z,\zeta)$ with
$(z,v_z)\in T(P/G)$ and $ \zeta\in \goth g$.

If we choose a principal connection $A$ on $P\rightarrow P/G$,
then we have an explicit identification of $T(P/G) \oplus
\tilde{\goth g}$ and $TP/G$. The map given by:
$$ \alpha : TP/G \rightarrow T(P/G)\oplus \tilde{\goth g}, ~\alpha([x,v_x])=(T\rho_x (v_x),A_x(v_x))$$
where $T\rho_x:T_x P\rightarrow T_{G\cdot x}(P/G)$ is the tangent map to the projection map $\rho: P\rightarrow
P/G$. The dual of this map allows also for an explicit identification of $T^*P/G \cong T^*(P/G)\oplus \tilde{\goth g}^* $.
We have shown that $TP/G \cong T(P/G) \oplus \goth g$ is a bundle over $P/G$.

 \subsubsection{The reduced cotangent bundle $T^*P/G$ or Weinstein space} \cite{We78}

 The space $T^*P/G$ has a canonical bundle structure over
$T^*(P/G)$. The projection map
 $\tilde{\pi}_P : T^*P/G \rightarrow T^*(P/G)$ is defined as:
 $\tilde{\pi}_P ([x,p_x])=[x]=G\cdot x$.
The bundle $T^*P/G \rightarrow T^*(P/G)$ is the pull-back of the coadjoint bundle $\tilde{\goth g}$ to $T^*(P/G)$ along the projection map $\pi_{P/G} \colon T^*(P/G)\to P/G$.   Again with some slight abuse of notation we will denote it for $ T^*(P/G)\oplus \tilde{\goth
g}^*$.
Elements on  $T^*P/G$ will be denoted as $[x,p_x]=(z,p_z,\mu)$
where $(z, p_z)\in T^*(P/G)$, $z^a ,\: a=1,\ldots ,m$ are local coordinates on $P/G$, and $\mu\in \goth g ^*$.

As we have seen in the previous section both in the geometrical and in the variational picture appears the Pontryagin's bundle $M$. Thus the quotient space $M/G$ plays a fundamental role for the reduction procedure in either treatment of optimal control problems with symmetries.

\subsubsection{The reduced control bundle $C/G$}
If $G$ is a symmetry group of the optimal control system $(P,C,\Gamma, L)$, then the quotient space $C/G$ fibrates over $P/G$.  In fact
the bundle $\tilde{\pi} \colon C/G \to P/G$ is nothing else but the push-forward of the bundle $C$ along the projection map
$\rho$.     Notice that the map $\tilde{\pi}:C/G \rightarrow P/G$
given by $\tilde{\pi}([x,u])=[x]$ is well-defined
and $\tilde{\pi}^{-1} [x] \cong \pi^{-1} (x )$ because of the maps $\beta_x : \pi^{-1} (x) \rightarrow \tilde{\pi}^{-1}
[x] $, $ \beta_x (u) =[x,u]$ and $\gamma_x : \tilde{\pi}^{-1} [x]
\rightarrow \pi^{-1} (x)$, $\gamma_x([x,u])=(x,u)$. Then,
$\beta_x \circ \gamma_x = id_{\pi^{-1}(x)}$. Elements on
$\tilde{C}=C/G $ will be of the form $[x,u]=(z,u)$ with $z \in P/G$.  
In what follows and in order to make more transparent some constructions relative to quotient spaces, we will assume that 
the control bundle $\pi \colon C \to P$ is the pull--back to $P$ of the associated bundle to the principal bundle $\rho \colon P \to P/G$ with fiber the space $U$ where the group $G$ acts on the left.  With the notation above such associated bundle will be denoted by $\tilde{U} \to P/G$ and its pull--back to $P$, $\rho^*(\tilde{U})$ is simply $C = P\times U$.   Hence we will consider that the control bundle is trivial, however the action of the symmetry group $G$ on both factors is such that the quotient bundle is not necessarily so and becomes $C/G = \tilde{U}$ as a bundle over $P/G$. 

\subsubsection{The reduced Pontryagin bundle $M / G = (T^*P \times_P C) / G$}\label{MG}
The Pontryagin bundle $M = T^*P \times_P C$ is a bundle over $P$.  Moreover if $G$ is a symmetry group of the corresponding control system the action of $G$ lifts to $M$ and fibrates over $P$, hence the quotient space $M/G$ is a bundle over $P/G$, and it can easily be identified with the bundle $T^*(P/G)\oplus \tilde{\goth
g }^*\oplus \tilde{U}$ over $P/G$. Then the elements of the reduced Pontryagin bundle will be of the form:
$[x,p,u]=((z,p_z),\mu,u)$ where $(z,p_z)\in T^*(P/G)$, $\mu \in
\goth g^*$ and $u\in \tilde{U}$.

$$\begin{array}{cc} {\bfig\xymatrix{TP\ar[r]^{T\rho}\ar[d]_{\tau_P}
& T(P/G)\ar[d]^{\tau_{P/G}} \\ P\ar[r]_{\rho}& P/G&&}\efig}
\end{array}$$

%%%%%%%%%%%%%%%%%%%%%%%%%%%%%%%%%%%%%%%%%%%%%%%%

\subsection{The reduced Lagrange--Pontryagin principle}

 %%%%%%%%%%%%%%%%%%%%%%%%%%%%%%%%%%%%%%%%%%%%%%%%
Now we can write the explicit form of Lagrange-Pontryagin principle.
The lagrangian $L$ being invariant defines a function $l \colon \tilde{U} = C/G \to \mathbb{R}$,
by $l(z,u) = L(x,u)$, $(z,u) \in \tilde{U}$ and $x\in P$ such that $\rho_P(x) = z$.
The dynamical vector field $\Gamma \colon C = P \times U \to TP$, takes the form
$$\Gamma(x,u)=\tilde{\Gamma}(z,u) \oplus \tilde{\gamma}(z,u) $$
with $\tilde{\Gamma}(z,u)\in T(P/G)$ and $\tilde{\gamma}(x,u)\in \tilde{\goth g}$.
In fact $T_x P= T_z (P/G) \oplus {\goth g}$, hence $\Gamma(x,u)\in T_xP$ decomposes as an element $\tilde{\Gamma}(x,u)$ in $T_z(P/G)$ and another vector $\gamma(x,u)$ in $\goth g$.   Notice that if $\gamma(x,u)=A_x(\Gamma(x,u))$ where $A$ is a principal connection on $P$, then $\gamma(gx,gu)=A_{gx} (\Gamma(gx,gu))=A_{gx}(g_{*} \Gamma(x,u))=(L^*_g A)_x(\Gamma(x,u)=Ad_g(A_x (\Gamma(x,u))=Ad_g \gamma(x,u)$, then $\tilde{\gamma}(z,u)=[\gamma(x,u)]$, and $\gamma$ defines an element on the bundle $\tilde{\goth g}$ over $P/G$.
Moreover, if we compute $\tilde{\Gamma}(gx,gu) = T\rho_P (\Gamma (gx,gu)) = T\rho_P (g_*\Gamma(x,u)) = \tilde{\Gamma}(x,u)$, thus $\tilde{\Gamma}$ induces a map (denoted with the same symbol) $\tilde{\Gamma} \colon \tilde{U} \to T(P/G)$, $\tilde{\Gamma}(z,u) \in T_z(P/G)$.

Using the previous observations we can write then the density $\mathbb{L}(x,u,p)=L(x,u)+\langle p, \dot{x} - \Gamma(x,u) \rangle$ of the Lagrange-Pontryagin variational principle as:
 \begin{equation}\label{red_lag}
 \mathbb{L}(x,u,p)=l(z,u)+\langle p_z, \dot{z}-\tilde{\Gamma}(z,u) \rangle + \langle \mu , \xi -\tilde{\gamma}(z,u) \rangle
 \end{equation}
 where $\dot{x} \in T_xP$ is decomposed according to the identification $T_xP = T_z(P/G)\oplus {\goth g}$, and $\dot{z} = T\rho_P(\dot{x})$ and $\xi = A_x(\dot{x})$ with $A$ a principal connection on $P$.  We have also used the identification above $T_x^*P \cong T_z^*(P/G) \oplus {\goth g}^*$ and $p_x \mapsto (p_z, \mu )$.  Notice that $J_P (p_x) = \mu$ and $p_x$ is the horizontal lifting of $p_z$ with respect to the connection $A$ again.

The reduced Lagrange-Pontryagin variational principle is defined on a space of curves on $M/G$, however those curves are directly related to curves on $M$, thus the variations of such curves are not completely independent.  Because of the previous discussions, we have that $M/G=T^*P\times_G C = T^*(P/G) \oplus \tilde{\goth g}^*\oplus \tilde{U}$ with elements of the form $(z,p_z,\mu,u)$ and variations $\delta z, \delta p_z,\delta \mu,\delta u$.  Notice that $\delta z(0)=0=\delta z(T)$, because curves $x(t)$ projects down to curves $z(t)\in P/G$, $x(0)=x_0 ,\: x(T)=x_T$, and $z(0)=\rho(x_0)$, $z(T)=\rho(x_T)$.    

Moreover, given a principal connection $A$ on $P$, we can always write $x(t)=g(t)x^h (t)$ where $x^h (t)$ is the unique horizontal curve on $P$ starting at $x_0$ and projecting onto $z(t)=\rho(x(t))$.  Thus $x_0=g(0)x_h (0)= g(0)x_0$, then $g(0)=e$ and similarly $g(T)=e$.
If we write $\dot{x}(t)=\dot{g}(t)x^h (t) + g(t) \dot{x}^h (t)= \dot {g}(t) g^{-1} (t) g(t) x^h (t) + g(t) \dot{x}^h (t)=\zeta(t)_P (x(t))+g(t)\dot{x}^h (t)$, using the identification $T_xP \cong T_z (P/G) \oplus \tilde{\goth g}$, provided by $\dot{x}=(\dot{z},\zeta)$, where $\dot{x}=\dot{x}^h + \dot{x}^v$, $\rho_* \dot{x}^h = \dot{z}$, $\dot{x}^v = \zeta _P (x)$ $\zeta \in \mathcal{G}$ we get that the curve $\zeta(t) \in \goth g$ corresponds to the vertical component of the curve $\dot{x}$.
Moreover $\zeta(t)=\dot{g}(t)g^{-1}(t)$,
then: $$\delta \zeta = \delta \dot{g} g^{-1} + \dot{g} \delta
g^{-1}= \delta \dot{g} g^{-1}- \dot{g}g^{-1}\delta g
g^{-1}=
\dot{\widehat{\delta g g^{-1}}}  + \delta g g^{-1} \dot{g} g^{-1}
-\zeta \delta g g^{-1}= \dot{\eta} + [\eta,\zeta]$$ and the variations of the curve $\xi(t)$ will have the form:
$$
\delta \zeta = \dot{\eta} + [\eta,\zeta] \in \goth g
$$
for an arbitrary $\eta(t) \in \goth g$.

  %%%%%%%%%%%%%%%%%%%%%%%%%%%%%%%%%%%%%%%%%%%%%%%%
 \subsection{The Lagrange--Pontryagin's reduced equations}\label{pmpreduc}

The Euler-Lagrange equations defined by the reduced Lagrange-Pontryagin's principle will provide us with the differential conditions of PMP for the reduced system. We can compute them from the reduced variational principle $S_G$ on $M/G$ defined by the density eq. (\ref{red_lag}):
 $$
 S_G=\int_0 ^T [l(z,u)+\langle p_z,\dot{z}-\tilde{\Gamma}(z,u)\rangle + \langle \mu, \xi- \tilde{\gamma}(z,u)\rangle ]dt
 $$
or from the hamiltonian version given by:
\begin{equation}\label{red_funct}
S_G = \int_0 ^T [\langle p_z,\dot{z}\rangle + \langle \mu, \xi\rangle - h(z,p_z,u) ]dt
\end{equation}
where $h\colon M/G \to \mathbb{R}$ is the reduced Pontryagin Hamiltonian defined by:
$$ h (z,p_z,u) = \langle p_z, \tilde{\Gamma}(z,u) \rangle + \langle \mu, \tilde{\gamma}(z,u) \rangle - l(z,u) .$$

A standard analysis parallel to that in \cite{Yo09} will provide us the Euler--Lagrange equations of the functional
eq. (\ref{red_funct}), $\delta S_G = 0$:
\begin{equation}\label{RHP} \left\{ \begin{array}{l} \frac{dz}{dt} = \dfrac{\partial h}{\partial p_z}, \\\\
\frac{Dp_z}{dt} = -\dfrac{\partial h}{\partial z} - F_A(\dot{z},\cdot ), \\\\
\frac{D\mu}{dt} = - ad_{\xi}^*\mu,    \\\\
\xi = \dfrac{\partial h}{\partial \mu},    \\\\
\dfrac{\partial h}{\partial u} =  0. \end{array} \right.
\end{equation}
that will be called the reduced PMP.

%%%%%%%%%%%%%%%%%%%%%%%%%%%%%%%%%%%%%%%%%%%%%%%%

\section{Reduction of the Dirac structure of optimal control problem with symmetry}

The Dirac structure $D_\Omega$ defined on the Pontryagin bundle $M$ of an optimal control problem is a sub bundle of the bundle $TM \oplus T^*M$ over $M$.  It the optimal control problem has a symmetry group $G$, then we can therefore reduce $D_\Omega$ by taking the quotient space $D_\Omega/G$ and  consider it again as a sub bundle of $(TM \oplus T^*M)/G$.   That this idea is consistent is the content of the following simple observation:

\begin{proposition}  Under the same hypotheses as in section \ref{symmetric_ocp},
the Dirac structure associated to the presymplectic form $\Omega$, $D_\Omega \subset TM \oplus T^*M$ is $G$-invariant.
\end{proposition}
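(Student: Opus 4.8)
The plan is to derive the $G$-invariance of the sub-bundle $D_\Omega \subset TM \oplus T^*M$ directly from the $G$-invariance of the presymplectic form $\Omega$ itself, which is already at our disposal: the preceding proposition shows that $G$ acts (pre)symplectically on $(M,\Omega,H)$, and in any case $\Omega = \rp_P^*\omega_P$ while the action of $G$ on $M$ is $\rp_P$-related to the cotangent lifted action on $T^*P$, under which the canonical symplectic form $\omega_P$ is invariant. Writing $\Psi^M_g\colon M\to M$ for the action of $g\in G$ on the Pontryagin bundle, $\Psi^M_g(x,p,u)=(gx,gp,gu)$, I would first recall that $G$ acts on $TM\oplus T^*M$ through the tangent and (inverse) cotangent lifts, $g\cdot(v_m,\alpha_m)=(T\Psi^M_g v_m,\,(T\Psi^M_{g^{-1}})^*\alpha_m)$, now based at $gm$, and then check that this action carries $D_\Omega$ into itself.

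The key step is a short pointwise computation. By definition $(v_m,\alpha_m)\in D_\Omega$ means $\alpha_m=i_{v_m}\Omega_m$, and the invariance $(\Psi^M_g)^*\Omega=\Omega$ reads $\Omega_{gm}(T\Psi^M_g a,\,T\Psi^M_g b)=\Omega_m(a,b)$ for all $a,b\in T_mM$. Feeding this into the contraction, for every $w\in T_{gm}M$ one gets $\big((T\Psi^M_{g^{-1}})^*\alpha_m\big)(w)=\alpha_m(T\Psi^M_{g^{-1}}w)=\Omega_m(v_m,T\Psi^M_{g^{-1}}w)=\Omega_{gm}(T\Psi^M_g v_m,w)$, using $T\Psi^M_g\circ T\Psi^M_{g^{-1}}=\mathrm{id}$ in the last equality; hence $g\cdot(v_m,\alpha_m)=(T\Psi^M_g v_m,\,i_{T\Psi^M_g v_m}\Omega_{gm})\in D_\Omega$. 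Since $\Psi^M_g$ is a diffeomorphism, this exhibits the lift as a vector bundle isomorphism of $D_\Omega$ onto itself covering $\Psi^M_g$, which is exactly the asserted $G$-invariance; no separate verification of maximal isotropy or of the Courant integrability condition is needed, as both are intrinsic to $\Omega$ and are transported automatically.

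I do not expect a genuine obstacle here: the statement is essentially a naturality property of the assignment $\Omega\mapsto D_\Omega$, and the only thing demanding a little care is the bookkeeping of base points and of which lift (tangent versus inverse-cotangent) acts on each summand of $TM\oplus T^*M$. A slightly more invariant packaging of the same argument is to observe that $D_\Omega$ is the kernel of the vector bundle map $\Sharp_\Omega\colon TM\oplus T^*M\to T^*M$, $(v,\alpha)\mapsto \alpha-i_v\Omega$, which is manifestly $G$-equivariant as soon as $\Omega$ is $G$-invariant; the kernel of an equivariant bundle map is an invariant sub-bundle, and we are done.
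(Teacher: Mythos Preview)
Your proof is correct and follows essentially the same route as the paper: both arguments reduce the $G$-invariance of $D_\Omega$ to the $G$-invariance of $\Omega$ and then check, by a direct pointwise computation, that $(v,\alpha)\in D_\Omega$ implies $(g_*v,\,g^{*-1}\alpha)\in D_\Omega$. Your write-up is a bit more explicit about base points and lifts, and your closing remark that $D_\Omega=\ker\Sharp_\Omega$ for the equivariant map $\Sharp_\Omega(v,\alpha)=\alpha-i_v\Omega$ is a nice repackaging, but it is the same idea.
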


\begin{proof}
We notice that the action of  $G$ on $M$ lifts canonically both to $TM$ and $T^* M$.
 Then $(X, \alpha)\in D_\Omega$, implies that $(g_{*}X,g^{*-1}\alpha)\in D_\Omega$, where $gx$ denotes the action of a group element $g\in G$ on $x\in M$.
In fact if  $(X, \alpha)\in D_\Omega$,
then $\Omega(X,Y)=\langle \alpha, Y \rangle$, $\forall Y \in
TM$ and therefore $(g^* \Omega)(X,Y)=g^* \langle \alpha, Y
\rangle = \langle g^{*-1}\alpha, g Y \rangle$.

On the other hand,
$(g^\ast \Omega)(X,Y)=\Omega(g_* X,g_* Y)$ and therefore
$(g_*X,g^{*-1} \alpha)\in D_\Omega$.
\end{proof}

\begin{definition}
The sub bundle $[D_\Omega]_G \subset TM/G \oplus T^*M/G$ over $M/G$  is  called the \textsf{Dirac reduction}  of $D_\Omega$.
\end{definition}

A simple calculation using the identification $M/G \cong T^*(P/G) \oplus \tilde{\goth g}^* \times \tilde{U}$ (as bundles over $P/G$) discussed in \ref{MG}, shows that 
\begin{equation}\label{TMG}
TM/G \cong T(T^\ast P \times_P \tilde{U})/G \cong TT^\ast (P/G) \oplus (\tilde{\goth
g}^\ast \oplus \tilde{U}) \times (\tilde{U} \oplus \tilde{V}) 
\end{equation} 
and
\begin{equation}\label{TstarMG}
T^\ast M/G \cong T^\ast(T^\ast P \times \tilde{U})/G \cong T^\ast
T^\ast (P/G)\oplus (\tilde{\goth g}^\ast \oplus \tilde{U}) \times
(\tilde{U}^\ast \oplus \tilde{V}^\ast) , 
\end{equation}
where we assume as in section \ref{reduced_bundles} that $P \to P/G$ is a principal bundle with fiber $G$ and that the bundle of the controls $\pi \colon C \to P$ is the pullback to $P$ of a bundle associated to  $\rho_P\colon P \to
P/G$ with standard fiber $U$, i.e., we have an action of  $G$ on the space $U$ and we consider the bundle on
$P/G$ defined by $(P \times U)/G= P\times_G U$ and we denote this bundle by $\tilde{U}$. 
We see that $C=\rho_P^\ast \tilde{U}$ is a bundle over $P$ with fiber $U$ and group structure $G$.
With a little abuse of notation we shall denote even with $\tilde{U}$ the pull-back of this bundle to $T^*(P/G)$, $T(P/G)$, etc.
In both equations (\ref{TMG}) and (\ref{TstarMG}), we denote, as usual, by $\tilde{\goth g}$, $\tilde{\goth g}^*$ the associated bundles to $\goth g$,  $\goth g^*$ by the adjoint and coadjoint actions respectively, and by $\tilde{V}$ and  $\tilde{V}^*$ the associated bundles to $V=\goth g \oplus \goth g^*$ and  $V^*= \goth g^* \oplus \goth g$ by the corresponding induced actions respectively (notice that we have identified $(\tilde{\goth g}^*)^*=\tilde{\goth g}$ and $(\tilde{V}^*)^*=\tilde{V}$).

Again another calculation shows that:
\begin{equation}\label{DOmegaG}
[D_\Omega]_G \subset  TT^\ast (P/G) \oplus T^\ast T^\ast (P/G)
\oplus (\tilde{\goth g}^* \oplus \tilde{U}) \times (\tilde{U}
\oplus \tilde{U}^\ast \oplus \tilde{V} \oplus \tilde{V}^\ast).
\end{equation}

Notice that  $[D_\Omega]_G=D_{\Omega}/G$ does not define a Dirac structure on the reduced space $M/G$ because is not a sub bundle of $T (M/G) \oplus T^*(M/G)$.   In fact $TM/G \cong T(M/G)
\oplus \tilde{\goth g} \cong T(T^*P \times \tilde{U}/G) \oplus \tilde{\goth g} \cong $ and $T^\ast M/G \cong T^\ast(M/G) \oplus
\tilde{\goth g}^*$, then $[D_\Omega]_G ~\subset  T(M/G)
\oplus T^\ast(M/G) \oplus \tilde{\goth g} \oplus  \tilde{\goth g}^*$. We stress that  $[D_\Omega]_G \rightarrow M/G$ is a bundle. We call this bundle a fiber Dirac structure.

As it follows from the expressions above, dealing directly with the reduced Dirac structure $[D_\Omega]_G$ is quite involved.  
However, the presymplectic form $\Omega$ is the pull--back to $M$ with respect to the projection map $\rp_P \colon M\to T^*P$ of the natural  symplectic structure $\omega_P$ on the cotangent bundle $T^*P$, $\Omega =\rp_P^* \omega_P$.   Then the Dirac structure $D_{\Omega}$ will be related to the canonical Dirac structure on $T^*P$ associated to the symplectic form $\omega_P$ in a similar way.  This simple observation that will help enormously on this task.  The precise definition of such relation is given by the notion of pull--back and push--forward of Dirac structures \cite{Yo09}. 

\begin{definition}[Push--forward and pull--back of Dirac structures]
Let  $\psi \colon N \to N' $ a differentiable map, and let $\mathfrak{Dir}(N)$ and $\mathfrak{Dir}(N')$ be the sets of Dirac structures on $N$ and $N'$ respectively.   The forward map $\mathcal{F}\psi \colon \mathfrak{Dir}(N) \to \mathfrak{Dir}(N')$
can be defined for $D \in \mathfrak{Dir}(N)$ by:
$$
\mathcal{F}\psi (D) = \{ (\psi_*(u),\alpha)| u \in TN, \alpha\in TN'^*; (u,\psi^*(\alpha))\in D \} ,
$$
and the backward map $\mathcal{B}\psi \colon \mathfrak{Dir}(N') \to \mathfrak{Dir}(N)$ can be defined for $D' \in \mathfrak{Dir}(N')$ by:
$$
\mathcal{B} \psi(D') = \{ (v,\psi^{*} (\beta)) | v \in TN, \beta \in TN'^*; \, (\psi_*(v),\beta) \in D' \}.
$$
Using these mappings we can define a pullback and a push forward of Dirac structures along a map $\psi$ as: 
$$ 
\psi_* D = \mathcal{F}\psi (D)
$$
$$
\psi^* D' = \mathcal{B}\psi(D')
$$ 
\end{definition}

Now we see immediately that the pullback $\rp_P^* D_P$ of the canonical Dirac structure on $T^*P$, i.e. $D_P = D_{\omega_P} \subset
T(T^\ast P)\oplus T^\ast (T^\ast P)$ defined by the canonical symplectic structure $\omega_P$ in $T^\ast P$, with respect to the projection map $\rp_P \colon M\to T^*P$ is $\mathcal{B} \rp_{P}(D_{P}) = \{ (X,  \rp^*_{P} \alpha) \in TM \oplus
T^* M  \mid  ({\rp_P}_* X, \alpha) \in D_P \}$ and $\mathcal{B}\mathsf{\rp}_{P}(D_{P}) = D_\Omega$ and similarly, the push--forward of the Dirac structure $D_\Omega$ with respect to the projection map $\rp_P$ is $D_P$, i.e,. $\mathcal{B}\rp_P (D_\Omega ) = D_P$.  

The main idea is brought from the description of Marsden and Yoshimura \cite {Yo09} of implicit lagrangian systems. Therefore we will name this description as the implicit description of regular optimal control problems in terms of Dirac's structures. 

Moreover we can consider directly the Dirac structure on the quotient space  $M/K=\tilde{M}$, associated to the symplectic structure $\tilde{\Omega}$, i.e.,
$D_{\tilde{\Omega}} \subset ~T \tilde{M} \oplus ~T^* \tilde{M}$.

The application $ p: M \rightarrow \tilde{M}$ induces an application $Tp: TM
\rightarrow  T \tilde{M}$ and $D_\Omega$ is the backward with respect to $Tp$ of $D_{\tilde{\Omega}}$.

In fact, following the notation of \cite{Yo09} we can introduce the backward map, given by
$BTp(D_{\tilde{\Omega}})=\{(X, p^* \tilde{\alpha})\in TM \oplus
T^* M |~(p_\ast X, \tilde{\alpha}) \in D_{\tilde{\Omega}}\}$ and we can see that $BTp(D_{\tilde{\Omega}})=D_\Omega$.

We shall denote the relation too by  $D_\Omega/K:=
D_{\tilde{\Omega}}$ where $K={\textrm Ker} Tp$ and we shall say that the presymplectic Dirac structure $D_\Omega$ projects on the symplectic Dirac structure $D_{\tilde{\Omega}}$.

Finally we observe that the Lie -Dirac reduction of the Dirac structure $D_{\tilde{\Omega}}$ give us another fiber Dirac structure $[D_{\tilde{\Omega}}]_G$ on $T(\tilde{M}/G)$,
i.e.:
$$[D_{\tilde{\Omega}}]_G:= D_{\tilde{\Omega}}/G  \subset  T(\tilde{M}/G)
\oplus T^\ast(\tilde{M}/G) \oplus \tilde{\goth g} \oplus
\tilde{\goth g}^*.$$

It is easy to prove that also the forward map of $[D_\Omega]_G$ with respect to the projection  $p_P: M\rightarrow T^*P$ is the reduced Dirac structure on $T^* P$.

The application $p \colon M \to \tilde{M}$ induces an application
$p_G \colon M/G \to \tilde{M}/G$. It is easy to have that
$Bp_G[D_{\tilde{\Omega}}]_G=[D_{\Omega}]_G$, or roughly speaking, the Dirac reduction of the Dirac structure defined by the symplectic form  $\Omega$ is the pull-back of the Dirac reduction of the Dirac structure induced by the symplectic form $\tilde{\Omega}$, or, the fiber Dirac structure  $[D_{\Omega}]_G$ projects to the fiber Dirac structure $[D_{\tilde{\Omega}}]_G$.

%%%%%%%%%%%%%%%%%%%%%%%%%%%%%%%%%%%%%%%%%%%%%%%%
%In the sequel we will apply all this ideas to the presymplectic system associated to an Optimal %%Control Problem  $(P,C, \Gamma, L)$                                                                                                    %
%%%%%%%%%%%%%%%%%%%%%%%%%%%%%%%%%%%%%%%%%%%%%%%%
%%%%%%

 \subsection{Reduced Dirac equations of an optimal control problem with symmetry}
\label{REOCP}
 Once the reduced structure is defined, we would have to present the associated reduced equations:
 \begin{equation}
 ([X],[dH])\in [D_{\Omega}]_G
 \end{equation}
 where the symbol $[x]$ indicates the reduced system.
Determine these equations directly is possible but complicated.
To render this calculation more easily we make the following observation:
the presymplectic structure $\Omega$ on $M$ is given by the pull-back with respect to the projection $p_P: M\rightarrow T^*P$ of the natural symplectic structure $\omega$ on the cotangent bundle $T^*P$: $\Omega=p{_P}^* \omega$. Then $\Omega$ doesn't contains information on the control bundle $C$.
In other words if we consider the Dirac structure on $T^*P$ associated to the natural symplectic form $\omega_P$:
$$
D_P\subset T(T^*P)\oplus T^*(T^*P)
$$
it is easy to prove that the projection of $[D_\Omega]_G$ with respect to the bundle  $\tilde{U} \oplus \tilde{U}^\ast$, projects on the cotangent reduction of the canonical Dirac structure on
$T^\ast P$ because the factors  $\tilde{U}$ and $\tilde{U}^\ast$ don't appear in the definition of $\Omega$.

We shall denote this projection by  $\Pi \colon [D_\Omega]_G \to [D_{ P}]_G$.
On the other hand, we can collect here the results obtained by Marsden et al regarding the cotangent bundle reduction of the Dirac structure $D_P$. Thus we have:
$$[D_{P}]_G \subset  TT^\ast (P/G) \oplus T^\ast T^\ast (P/G)
\oplus (\tilde{\goth g}^* \times (\tilde{V} \oplus
\tilde{V}^\ast)),$$ and in local form

$$[D_{P}]_G=\{(z,p_z,\mu;v_z,v_{p_z};\pi_z,\pi_{p_z};\xi,
v_{\mu};\alpha_\xi,\alpha_{v_{\mu}})~|~$$$$| \langle \pi_z,\delta
z \rangle+\langle \delta p_z, \pi_{p_z} \rangle+\langle
\alpha_\xi, \xi \rangle+\langle \alpha_{v_\mu},v_{\mu} \rangle
=[\Omega]_{G~(z,p_z,\mu)}(v_z,v_{p_z},\xi,v_\mu)(\delta z,\delta
p_z,\zeta,\delta \mu ),$$
$$ \forall (\delta z,\delta
p_z,\zeta,\delta \mu ) \in T_{(z,p_z)}T^\ast (P/G)\times
\tilde{V}\}.$$

%%%%%%%%%%%%%%%%%%%%%%%%%%%%%%%%%%%%%%%%%%%%%%%%%%%%%%%%%%%%%%%%%%%%%%%%%%%%%%%%

\subsection{The Dirac reduction and the reduced Pontryagin's Maximum Principle}
In the section \S \ref {pmpreduc} obtained the equations of the reduced PMP for Optimal Control systems with symmetry, based on calculating the differential one of the reduction of Lagrange-Pontryagin principle. On the other hand in the previous sections, we show how a optimal control problem could be described in two different ways across Dirac's structures: as a presymplectic problem or as an implicit problem with Dirac structure $D _ {T ^\ast P} =D_P$.
We saw also that both Dirac structures are related, being the second one a projection of the first one.

Finally also we saw that in presence of symmetry we can reduce both Dirac structures and the corresponding reduced Dirac structures are related again by projection.

Therefore $[D_\Omega]_G$ and $[D_P]_G$ are such that $[D_P]_G$
is the projection of $[D_\Omega]_G$ with respect to $\tilde{V}
\oplus \tilde{V}^\ast$. The reduced Dirac structure $[D_P]_G$ is the Dirac's reduction of Dirac structure cotangent in $T^\ast P$. Then if we denote for $H_G$ the Pontryagin Hamiltonian induced in $M/G=T^\ast(P/G) \oplus
\tilde{\goth g}^\ast \oplus \tilde{U}$ the quotient space of the Pontryagin bundle, we have that the sub manifold $M_1=\{
\pd{H}{u^a}=0\}$ projects to the sub manifold $[M_1]_G=\{
\pd{H_G}{u^a}=0\}$ y $M_1/G=[M_1]_G$. Therefore we will define Pontryagin's implicit equation as the dynamics defined by the equation:

 \Eq{\label{ecimplicpontry}\left([X]_G(z,p_z,\mu,u),
[dH_G]\Big |_{T[M_1]_G}(z,p_z,\mu,u)\right)\in
[D_P]_G(z,p_z;\mu),}$$ (z,p_z)\in T^\ast(P/G),~\mu \in \goth
g^\ast.$$
These equations coincide with the reduced equations of the PMP obtained previously.

\begin{theorem}\label{red_Dirac_ham}
 Given a regular optimal control system $(L,\Gamma,C \rightarrow
P)$ with symmetry group $G$, the equations of the Reduced Pontryagin Maximum Principle obtained as reduction of Lagrange-Pontryagin's variational principle, coincide with the equations of Dirac's reduction of the equations of the description of implicit Dirac of the problem of ideal control, equations \ref{ecimplicpontry}.
\end{theorem}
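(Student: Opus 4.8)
The plan is to take the implicit Dirac equation (\ref{ecimplicpontry}) as the definition of the reduced dynamics on the Dirac side and to unpack it, using the explicit local description of the reduced cotangent Dirac structure $[D_P]_G$ recorded in \S\ref{REOCP}, until it becomes literally the reduced Lagrange--Pontryagin system (\ref{RHP}) derived in \S\ref{pmpreduc}. The reason this is enough — and the conceptual source of the coincidence — is already visible before reduction: the differential of the Lagrange--Pontryagin functional $\mathbb S$ has density $i_{\dot\gamma}\Omega_\gamma - dH_P\circ\gamma$, so $\delta\mathbb S = 0$ is literally the condition $(X,dH_P)\in D_\Omega$ together with the Gotay--Nester consistency condition $\varphi_a = \partial H_P/\partial u^a = 0$ cutting out $M_1$. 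Hence the theorem asserts exactly that forming the quotient by $G$ is compatible with this identification. Because $\Omega = \mathsf p_P^*\omega_P$ does not involve the control directions, this compatibility reduces to a statement about the reduced cotangent bundle $T^*P/G\cong T^*(P/G)\oplus\tilde{\goth g}^*$, its reduced symplectic form, the induced Hamiltonian $H_G = h$ with $h(z,p_z,\mu,u) = \langle p_z,\tilde\Gamma(z,u)\rangle + \langle\mu,\tilde\gamma(z,u)\rangle - l(z,u)$, and the reduced constraint $[M_1]_G = M_1/G = \{\partial h/\partial u^a = 0\}$.

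The core is then a short local computation. I would substitute into (\ref{ecimplicpontry}) the displayed form of $[D_P]_G$, i.e. the requirement that the pairing of the covector part $(\pi_z,\pi_{p_z};\alpha_\xi,\alpha_{v_\mu})$ of $dH_G$ taken modulo $T[M_1]_G$ against an arbitrary test variation $(\delta z,\delta p_z,\zeta,\delta\mu)\in T_{(z,p_z)}T^*(P/G)\times\tilde V$ equals the value of the reduced symplectic form on the velocity $[X]_G = (v_z,v_{p_z};\xi,v_\mu)$ and that variation. The reduced form on $T^*P/G$ is the Weinstein form obtained from $\omega_P$ by cotangent bundle reduction: relative to a principal connection $A$ on $P\to P/G$ it splits as the canonical symplectic form on $T^*(P/G)$, plus the magnetic term built from the curvature $F_A$ paired with $\mu$, plus the fibrewise Lie--Poisson term on $\tilde{\goth g}^*$ — exactly the pieces recorded in \S\ref{REOCP}. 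Matching the coefficients of the independent test directions then yields, respectively, $dz/dt = \partial h/\partial p_z$ from the canonical piece, the covariant equation $Dp_z/dt = -\partial h/\partial z - F_A(\dot z,\cdot)$ in which the curvature term comes from the magnetic piece, the momentum equation $D\mu/dt = -ad_\xi^*\mu$ from the Lie--Poisson piece, and the reconstruction relation $\xi = \partial h/\partial\mu$; the one remaining condition, that $dH_G$ is only defined modulo $T[M_1]_G$, is the algebraic equation $\partial h/\partial u^a = 0$. These five equations are precisely (\ref{RHP}).

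I expect the main obstacle to be computational rather than conceptual: the careful identification of the reduced symplectic form and the bookkeeping of the connection-dependent contributions, in particular checking that the magnetic term reproduces $F_A(\dot z,\cdot)$ with the correct sign and that the fibre term reproduces $ad_\xi^*\mu$. This is the standard cotangent bundle reduction computation of Weinstein, Marsden and collaborators, which can be quoted rather than redone. A secondary point requiring attention is that the Gotay--Nester constraint analysis commutes with the quotient — that $M_1/G = [M_1]_G$ and that restricting $dH_G$ to $T[M_1]_G$ downstairs is the reduction of the restriction of $dH_P$ to $TM_1$ upstairs — so that the extra equation $\partial h/\partial u^a = 0$ appears on both sides and the regularity hypothesis ($W_{ab}$ of maximal rank) passes to the reduced system. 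Granting these, the identification of (\ref{ecimplicpontry}) with (\ref{RHP}) is term by term, which proves the theorem.
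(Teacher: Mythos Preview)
Your proposal is correct and follows essentially the same route as the paper: both arguments quote the explicit local description of the reduced cotangent Dirac structure $[D_P]_G$ from \cite{Yo09} (Prop.~4 there), write the reduced vector field $[X]_G$ and the restricted differential $[dH]_G|_{T[M_1]_G}$ in components on $T^*(P/G)\oplus\tilde{\goth g}^*\oplus\tilde U$, and then read off the five equations of (\ref{RHP}) by matching against the algebraic conditions $v_z=\pi_{p_z}$, $v_{p_z}+\pi_z=-F_A(v_z,\cdot)$, $\xi=\alpha_{v_\mu}$, $v_\mu+\alpha_\xi=ad^*_\xi\mu$ together with $\partial h/\partial u=0$. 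Your additional remarks on why the Gotay--Nester constraint passes to the quotient and on the origin of the curvature and Lie--Poisson terms are helpful context but do not constitute a different strategy.
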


\dem The proof of the theorem relies on the description of Dirac's reduction of Dirac's structure cotangent $D_P$ obtained in \cite {Yo09}. We will use the above mentioned result without proving it again. This way we obtain that the structure $[D_P]_G$ for every $(z,p_z;\mu) \in T^\ast(P/G)\oplus \goth g^\ast$ has the form
(Prop. 4 \cite{Yo09}):

$$[D_{P}]_G(z,p_z;\mu)=\{(v_z,v_{p_z};\xi,
v_{\mu}),(\pi_z,\pi_{p_z};\alpha_\xi,\alpha_{v_{\mu}}) \in
T_{(z,p_z)}T^\ast (P/G)\times \tilde{V} \oplus$$ $$\oplus
T^\ast_{(z,p_z)}T^\ast (P/G)\times \tilde{V}^\ast
~|~v_z=\pi_{p_z},~~v_{p_z}+\pi_z=-F_A(v_z,.),~
\xi=\alpha_{v_{\mu}},~v_\mu+\alpha_\xi=ad^\ast_{\xi}\mu\}.$$

Then if we denote $[X]_G$ as the reduced vector field such that for every curve
 $(z(t),p_z(t);\mu(t),u(t))\in T^\ast(P/G)
\oplus \tilde{\goth g}^\ast \oplus \tilde{U}$, take the value

$$[X]_G(z(t),p_z(t);\mu(t),u(t))=(z(t),p_z(t),u(t);\dfrac{dz(t)}{dt},
\dfrac{Dp_z(t)}{Dt},\mu(t),\xi(t),\dfrac{D\mu(t)}{Dt})$$and bearing in mind that $[dH]_G=dH_G$ restrict to $T[M_1]_G=T(M_1/G)$
take the form:
$$[dH]_G \Big
|_{T[M_1]_G}=(z,p_z;\mu,u;-\dfrac{\partial h}{\partial
z},\dfrac{\partial h}{\partial p_z},
\dfrac{\partial h}{\partial \mu},0)$$
and finally:
\Eq{\label{teodemdirac}\left\{\begin{array}{ll}\dfrac{dz}{dt}=\dfrac{\partial h}{\partial p_z},\\\\
\dfrac{Dp_z}{Dt}=-\dfrac{\partial h}{\partial z}-F_A(v_z,.),\\\\
\dfrac{D\mu}{Dt}= - \mathrm{ad}^\ast_\xi\mu,\\\\
\xi=\dfrac{\partial h}{\partial \mu}.\end{array}\right.}
together with
$\dfrac{\partial h}{\partial u}=0.$
\findem

\newpage
%%%%%%%%%%%%%%%%%%%%%%%%%%%%%%%%%%%%%%%%%%%%%%%%
%%%%%%%%%%%%%%%%%%%%%%%%%%%%%%%%%%%%%%%%%%%%%%%%
%%%%%%%%%%%%%%%%%%%%%%%%%%%%%%%%%%%%%%%%%%%%%%%%
\section{A simple example: The subriemannian geodesic problem for the Heisenberg group}
This problem is a basic example in subriemannian geometry and has been solved in many different ways. We will use it here to illustrate how the variational theory discussed above as well as the Dirac reduction procedure translate into this very fundamental example.  It is worth to point it out that in this case the hypothesis of non existence of abnormal extremals and regularity  coincides.
It is also interesting to remark that the reduction procedure provides a very simple and descriptive way to solve this problem.

The geodesic problem for the 3-dimensional real Heisenberg group $\Bbb{H}^1$ in subriemannian geometry is formulated as the optimal control problem of joining two points in $\mathbb{R}^3$ by using two non commuting vector fields $\Gamma_1=\frac{\partial}{\partial x}-\frac{y}{2} \frac{\partial}{\partial z}$ and
$\Gamma_2=\frac{\partial}{\partial x}+\frac{x}{2}\frac{\partial}{\partial z}$, which satisfy the commutation relations:
$$
[\Gamma_1,\Gamma_2]= \Gamma_3 ;~~
 [\Gamma_1,\Gamma_3]=[\Gamma_2,\Gamma_3]=0 ,
$$
and minimizing the length of the corresponding curve, i.e.: if we denote by  $q=(x,y,z)$, a point in the three dimensional space $\mathbb{R}^3$ , by $u(t) =(u_1(t),u_2(t))$ a curve in $\mathbb{R}^2$ and $\Gamma_3=\frac{\partial}{\partial z}$, and we integrate the differential equation:
\begin{equation}
\dot{q}=\Gamma(q)= u_1(t) \Gamma_{1}(q) + u_2(t) \Gamma_{2}(q) ,
\end{equation}
we want to minimize the length of the integral curves of the vector field $\Gamma$ with respect to the subriemannian metric $\eta_D$ defined on the distribution $D$ spanned by $\Gamma_1$ and $\Gamma_2$ and given by
\Eq{\label{heisenberg1}\left\{\begin{array}{ll}
\mathcal{L}=\frac{1}{2} (u_1^2+u_2^2)\\
q(0)=q_0~~q(T)=q_T .
\end{array}\right.}

The Pontryagin Hamiltonian is given by:
\begin{eqnarray}\label{heisenberg2}
H_P(q,\lambda,u)=u_1\langle \lambda, \Gamma_1\rangle +u_2\langle\lambda,\Gamma_2\rangle-\frac{1}{2}(u_1^2+u_2^2)=\\
= u_1(\lambda_1+y\lambda_3)+u_2(\lambda_2-x\lambda_3)-\frac{1}{2}(u_1^2+u_2^2)
\end{eqnarray} 
 with $\lambda\in(\lambda_1,\lambda_2,\lambda_3)\in\mathbb{R}^{3*}$.
 
 The maximization condition provides an explicit expression for the controls:
 $$
 \langle \lambda,\Gamma_a\rangle =u_a; \quad \quad a=1,2
$$
from which we obtain the hamiltonian function on $T^*\mathbb{R}^3$:
$$
\mathcal{H}_P (q,\lambda ) = \frac{1}{2}=\left( (\lambda_1+y \lambda_3)^2+(\lambda_2-x \lambda_3)^2\right) , 
$$
and the corresponding hamiltonian system is:

\begin{equation}
\left\{ \begin{array}{l}
\dot{x}=\lambda_1+y\lambda_3\\
\dot{y}=\lambda_2-x\lambda_3\\
\dot{z}=\lambda_1 y-\lambda_2 x+\lambda_3 (y^2+x^2)\\
\dot{\lambda}_1=(\lambda_2-x\lambda_3)\lambda_3\\
\dot{\lambda}_2=-(\lambda_1+y\lambda_3)\lambda_3\\
\dot{\lambda}_3=0
\end{array} \right.  .
\end{equation}
  
The previous equations are solved in various ways in literature, but here we will proceed to integrate it  by exploiting the fact that the optimal control problem above has a natural symmetry.  In order to make it explicit we will describe the optimal control problem as a problem on the 3 dimensional Heisenberg group.
For our  purposes  is better to realize the 3-dimensional real Heisenberg group $\Bbb{H}^1$ as the group of $3\times3$ upper triangular matrices of the form:
$$
\left(\begin{array}{ccc}1 & A & C \\0 & 1 & B \\0 & 0 & 1\end{array}\right)
$$ 
and the composition law of the group is matrix multiplication.  Thus, the left action $L\colon G\times G \to G$ is given by the left product of matrices. Notice that as a manifold the Heisenberg group $\Bbb{H}^1$ is diffeomorphic to $\mathbb{R}^3$.  The Heisenberg group is a nilpotent  Lie group whose Lie algebra is generated by the matrices:
\begin{equation}
\nonumber
\gamma_1=\left(\begin{array}{ccc}0 & 1 & 0 \\0 & 0 & 0 \\0 & 0 & 0\end{array}\right), \quad  \gamma_2=\left(\begin{array}{ccc}0 & 0 & 0 \\0 & 0 & 1 \\0 & 0 & 0\end{array}\right), \quad  \gamma_3=
 \left(\begin{array}{ccc}0 & 0 & 1 \\0 & 0 & 0 \\0 & 0 & 0\end{array}\right) ,
\end{equation}
with commutation relations:
\begin{equation}
\nonumber
 [\gamma_1,\gamma_2]=\gamma_3 ;~~
 [\gamma_1,\gamma_3]=[\gamma_2,\gamma_3]=0
 \end{equation}
 A simple computation shows that the exponential map is given by:
 \begin{equation}
 \nonumber
 g=\exp(a\gamma_1+b\gamma_2+c\gamma_3)= \left(\begin{array}{ccc}1 & a & c+\frac{ab}{2} \\0 & 1 & b \\0 & 0 & 1\end{array}\right) .
 \end{equation}
 Given the element $\xi = u_1 \gamma_1 + u_2 \gamma_2$ on the Lie algebra of $\Bbb{H}^1$, we can define the left invariant vector field on $\Bbb{H}^1$:
 $$
 \dot{g}=g(u_1\gamma_1+u_2\gamma_2) \quad  u_1,u_2 \in \mathbb{R} ,
 $$
that, with a suitable choice of coordinates:
 $$
 x=a; \quad y=b; \quad z=c-\frac{1}{2}xy ,
 $$
can be written as: 
\Eq{\label{heisenberg3}\left\{\begin{array}{ll}\dot{x}=u_1,\\\\
\dot{y}=u_2\\\\
\dot{z}=\frac{xu_2-yu_1}{2}\\\\
\end{array}\right.}
which is the control equation in the problem (\ref{heisenberg1}).

In this formulation we can identify the state space of the system $P=\Bbb{H}^1\cong \mathbb{R}^3$, the control bundle is the trivial bundle $C=\Bbb{H}^1\times\mathbb{R}^2$ over $\mathbb{R}^3$ and the Pontryagin bundle $M$ is given by:
$$M=T^*P\times_{P}C=T^*(\Bbb{H}^1)\times_{\Bbb{H}^1}\Bbb{H}^1\times \mathbb{R}^2)\cong \mathbb{R}^3\times\mathbb{R}^{3*}\times \mathbb{R}^2  .$$
The Pontryagin Hamiltonian $H_P \colon M\to\mathbb{R}$ is the function (\ref{heisenberg2}).
The group $\Bbb{H}^1$ acts on the left on $P$ and this action lifts trivially to $C$.  Hence the optimal control problem is invariant under $\Bbb{H}^1$.     In what follows we will indicate the symmetry Lie group of this problem either  by $\Bbb{H}^1$:
$$
H_P(g,\lambda_g,u) = h([g,\lambda_g,u]), 
$$
where $h\colon M/G \to \mathbb{R}$ is the Hamiltonian function induced by the invariant Pontryagin Hamiltonian $H_P$, $[g,\lambda,u]$ denotes the orbit of the point $(g,\lambda,u) \in M$ under the action of $G$,
and the reduced Pontryagin's bundle has the structure $M/G=T^*G/G\times U=\goth g^*\times U=\mathbb{R}^3\times \mathbb{R}^2$.
We are using left translations to identify $T^*G\cong G\times \goth g^*$, i.e.,
$(g,\lambda_g)\in T^*G\mapsto (g,\lambda)\in G\times \goth g^*$ with $\lambda=L^*_g\lambda_g\in T^*_e G=\goth g^*$,
then Pontryagin's Hamiltonian reads:
$$
H_P(g,\lambda_g,u) = \langle \lambda_g, \dot{g}(g)\rangle - \mathcal{L}(g,u) = \langle \lambda, \dot{g}(e)\rangle- l(u)
$$
where $l(u) = \mathcal{L}([g,u])$,
and the reduced hamiltonian $h$ is a function of $(\lambda,u)\in \goth g^*\times U$ alone. 
In fact we get:
$$
h(\lambda,u)=\langle \lambda, u_1 \gamma_1+ u_2 \gamma_2\rangle - \frac{u_1^2+u_2^2}{2}= \langle \theta_1,\gamma_1\rangle \lambda_1u_1+\langle\theta_2,\gamma_2\rangle \lambda_2 u_2 - \frac{u_1^2+u_2^2}{2} ,
$$
where $\lambda=\lambda_1\theta_1+\lambda_2\theta_2+\lambda_3\theta_3$ and $\{ \theta_i \}$ is the dual base of $\{ \gamma_i \}$. 
 
At this point we can use the consistency conditions (\ref{constr}):
$$\frac{\partial H_P}{\partial u_a}=\frac{\partial h}{\partial u_a}=0, \quad a=1,2$$
which implies
\begin{equation}
\nonumber
\left\{\begin{array}{l}\lambda_1=u_1\\ \lambda_2=u_2 \end{array}\right.
\end{equation}
and finally,
\begin{equation}
\label{hred}
h(\lambda)=\frac{\lambda_1^2+\lambda_2^2}{2}  .
\end{equation}

The reduced Dirac structure will be the reduction of the canonical Dirac structure on $G$, associated to the natural symplectic form on $T^*G\cong \goth g\times G$.  Here we can follow the procedure introduced in \cite{Yo07a} and described again in Thm. \ref{red_Dirac_ham}.   In the particular instance that $P$ is a Lie group $G$, we have
$$
D_G=\{(\alpha,X)\in T^*(T^*G)\oplus T(T^*G) \mid \alpha=\omega_G(.,X)\}\subset T^*(T^*G)\oplus T(T^*G) \} ,
$$
and the reduced Dirac structure is a sub bundle:
$$
\left[ D_G\right]_G\subset (T^*(T^*G)\oplus T(T^*G))/G\simeq\goth g^*\times(V\oplus V^*)
$$
where $V=\goth g\oplus\goth g^*$.
Because of the invariance of the Dirac structure, we have for each fixed $\lambda\in\goth g^*$ a Dirac structure $\left[ D_G\right]_G(\lambda)\subset V\oplus V^*$
$$
\left[D_G\right]_G(\lambda)=\{\left( (\xi,\rho),(\nu,\eta) \right)\in V\oplus V^* \mid \langle\nu,\xi\rangle+\langle\sigma,\eta\rangle=\omega_\lambda^{/G}\left((\xi,\rho), (\zeta,\sigma)\right)~ \forall(\zeta, \sigma)\in V\} ,
$$
where $\omega_{\lambda}^{/G} $is the $\lambda$-dependent  bilinear form on $V$ given by
$$
\omega_{\lambda}^{/G} \left ((\xi,\rho),(\zeta,\sigma)\right )=\langle\sigma,\xi\rangle-\langle\rho,\zeta\rangle+\langle\lambda,[\xi,\zeta]\rangle .
$$
Now we apply our main theorem, Thm. \ref{red_Dirac_ham}, that extends to optimal control problems the reduction of implicit hamiltonian systems \cite{Yo07a}.  In this case because the constraints manifold $M_1$ determined by the optimal feedback is canonically identified with $T^*G$, the reduction is that of the standard hamiltonian system $(H,T^*G,X)$ and it is  given by a triple $(h,\goth g,\chi^{/G})$ that satisfies for each fixed $\lambda\in\goth g^*$, the reduced PMP eq. (\ref{ecimplicpontry}):
\begin{equation}\label{reducheis}
\left([X]_G(\lambda),dh (\lambda)\right)\in \left[D_G\right]_G(\lambda)
\end{equation}
where $[X]_G$ is the reduced vector field:
$$
[X]_G \colon \goth g^*\to\goth g\times \left(\goth g \times \goth g^*\right)
$$
and in local coordinates:
$$
[X]_G  = \left(\lambda,\xi(\lambda),\dot{\lambda}\right) ,
$$
where $\xi(\lambda)=T_gL_{g^-1}\dot{g}$.

For the reduced hamiltonian we obtain easily that:
$$dh = \left(\lambda,0,\frac{\partial h}{\partial \mu}\right)$$
and finally, recalling the expression of (\ref{hred}), we obtain the reduced Lie-Poisson equations (\ref{reducheis}):
\begin{equation}
\nonumber
\left\{\begin{array}{l}
\xi=\frac{\partial h}{\partial \lambda}\\
\dot{\lambda}=ad^*_{\xi}\lambda
\end{array}\right.
\end{equation}
i.e.
\begin{equation}
\nonumber
\left\{\begin{array}{l}
\xi_1=\lambda_1\\
\xi_2=\lambda_2\\
\xi_3=0\\
\dot{\lambda_1}=-\lambda_3 \xi_2\\
\dot{\lambda_2}=\lambda_3 \xi1\\
\dot{\lambda_3}=0
\end{array}\right.
\end{equation}
The first three equations of this system can be interpreted as a \emph{partial Legendre transform} \cite{Yo07a}. 
Then we have to solve the reduced Poisson system on $\goth g^*$:
\begin{equation}\nonumber
\left\{\begin{array}{l}
\dot{\lambda_1}=-\lambda_3 \lambda_2\\
\dot{\lambda_2}=\lambda_3 \lambda_1\\
\dot{\lambda_3}=0
\end{array}\right.
\end{equation}
whose solution is given by:
\begin{equation} \nonumber
\left\{\begin{array}{l}
\lambda_1=\lambda_{1,0} \cos(k t)-\lambda_{2,0} \sin(k t)\\
\lambda_2=\lambda_{1'0} \sin(k t)+\lambda_{2,0} \cos (k t)\\
\lambda_3= k 
\end{array}\right.
\end{equation}
 
 If we fix a value for the reduced hamiltonian:
$$
h(\lambda)=\frac{\lambda_1^2+\lambda_2^2}{2}=\frac{1}{2}
$$
we obtain a condition for $\lambda_{1,0}$ and $\lambda_{2,0}$:
$$
\lambda_{1,0}^2+\lambda_{2,0}^2=1 .
$$
Then we can choose:
\begin{equation}
\nonumber
\left\{\begin{array}{l}
\lambda_{1,0}=\cos \theta \\
\lambda_{2,0}=\sin \theta
\end{array}\right.
\end{equation}
and the explicit expression for the solutions are:
\begin{equation} \nonumber
\left\{\begin{array}{l}
\lambda_1 (t)=\cos(\theta+k t)\\
\lambda_2 (t)=\sin(\theta+k t)\\
\lambda_3 (t)= k 
\end{array}\right.
\end{equation}
 
To solve the geodesic problem it is sufficient to reconstruct the dynamic for $g(t)$ on $G$.
Remembering that $L_{g^-1 *}\dot{g}(t)=\xi(t)$ we have:
\begin{equation}\nonumber
\left\{\begin{array}{l}
\frac{dg(t)}{dt}=T_eL_{g(t)}\xi(t)\\
g(0)=g_0\\ 
\end{array}\right.
 \end{equation}
or in components:
\begin{equation}
 \label{geo}
\left\{\begin{array}{l}
\dot{x}(t)=\lambda_1(t)\\
\dot{y}(t)=\lambda_2\\
\dot{z} (t)= x\lambda_2 -y\lambda_1
\end{array}\right.
 \end{equation}
 If we consider, for instance the geodesics starting from the origin in $\mathbb{R}^3$, we have the initial conditions:

 \begin{equation}
 \nonumber
\left\{\begin{array}{l}
x(0)=0\\
y(0)=0\\
z(0)=0
\end{array}\right.
 \end{equation}
and integrating the system (\ref{geo}) we obtain the explicit expression of the geodesics in $\mathbb{R}^3$:
\begin{equation}
\nonumber
\left\{\begin{array}{l}
x(t)=\frac{1}{k}\sin(k t+\theta)-\frac{1}{k}\sin{\theta}\\
y(0)=\frac{1}{k}\cos(k t +\theta)+\frac{1}{k} \cos{\theta}\\
z(t)=\frac{1}{k^2}\sin(kt)+\frac{t}{k}
\end{array}\right.
 \end{equation}
which describe, as it is well--known, spirals in $\mathbb{R}^3$ whose projection on the plane are circles of radius $\frac{1}{k}$ and center $(-\frac{\sin(\theta)}{k},\frac{\cos(\theta)}{k})$.

%%%%%%%%%%%%%%%%%%%%%%%%%%%%%%%%%%%%%%%%%%%%%%%%%%%%%%%%%%%%%%%%%%%%%%%%%%%%%%%%%%%%%%%%%%%%%%%%

%\newpage

\end{document}